\newtheorem{theorem}{Theorem}
\newtheorem{assumption}{Assumption}
\newtheorem{lemma}{Lemma}
\newtheorem{envdef}{Definition}
\newtheorem{coro}{Corollary}
\newenvironment{aspprime}[1]
  {%
   \addtocounter{assumption}{-1}%
   \begin{assumption}}
  {\end{assumption}}
\newcommand{\norm}[1]{\lVert#1\rVert}
\newcommand{\expt}[2]{\mathbb{E}_{#1}[#2]}
\DeclareMathOperator{\minimize}{minimize}
\newcommand{\playerN}{\mathcal{N}}
\newcommand{\rset}[2]{\mathbb{R}^{#1}_{#2}}
\newcommand{\crset}[2]{\overline{\mathbb{R}}^{#1}_{#2}}
\newcommand{\nset}[2]{\mathbb{N}^{#1}_{#2}}
\DeclareMathOperator{\argmax}{argmax}
\newcommand{\Tblue}[1]{\textcolor{black}{#1}}
\title{\LARGE \bf
On the Convergence Rates of A Nash Equilibrium Seeking Algorithm in Potential Games with Information Delays
}
\author{Yuanhanqing Huang$^{1}$ and Jianghai Hu$^{1}$
\thanks{This work was supported by the National Science Foundation under Grant No. 2014816 and No.2038410. }
\thanks{$^{1}$The authors are with the Elmore Family School of Electrical and Computer Engineering, Purdue University, West Lafayette, IN, 47907, USA 
        {\tt\small \{huan1282, jianghai\}@purdue.edu}}%
}
\begin{document}

\maketitle
\thispagestyle{empty}
\pagestyle{empty}

\begin{abstract}

This paper investigates the equilibrium convergence properties of a proposed algorithm for potential games with continuous strategy spaces in the presence of feedback delays, a main challenge in multi-agent systems that compromises the performance of various optimization schemes. 
The proposed algorithm is built upon an improved version of the accelerated gradient descent method.  
We extend it to a decentralized multi-agent scenario and equip it with a delayed feedback utilization scheme. 
By appropriately tuning the step sizes and studying the interplay between delay functions and step sizes, we derive the convergence rates of the proposed algorithm to the optimal value of the potential function when the growth of the feedback delays in time is subject to \Tblue{sublinear, linear, and superlinear} upper bounds. 
Finally, simulations of a routing game are performed to empirically verify \Tblue{our findings}. 
\end{abstract}

\section{INTRODUCTION}

In a Nash equilibrium (NE) seeking problem, given a collection of selfish players whose cost functions may depend on the actions of others, the designer aims to propose \Tblue{a set of local protocols}, by following which this group of players eventually settle down to a stable action profile where unilateral deviations are unprofitable. 
Among the vast literature on noncooperative games and NE-seeking problems, potential games are a subclass of games that admit a potential function such that any local optimum of the potential function coincides with a local NE of the game \cite{monderer1996potential}. 
Although restrictive and less comprehensive by their nature, potential games still find applications in a wide range of societal systems, such as power control in wireless communication \cite{tse2005fundamentals,mertikopoulos2010dynamic}, flow control and routing in networks \cite{altman2002nash, rosenthal1973class}, and thermal load management of autonomous buildings \cite{jiang2021game}, to name a few. 
The problems discussed above are often examined via the general framework of online learning, which unfolds as follows: 
at each iteration, this group of players individually select an action, the chosen actions trigger (first-order or zeroth-order) feedback information for each player and the process repeats \cite{cesa2006prediction, bubeck2012regret}. 

A major challenge in many cases of practical interest is the latency between taking actions and receiving feedback due to, e.g., the existence of non-negligible communication delays, the fact that it takes time for physical systems to generate corresponding rewards for actions taken, etc. 
\Tblue{For example, in portfolio management, it is common for investors to wait for a specific period of time to assess the return rate of their investment strategies \cite{bell1988game}. }
This challenge is exacerbated in multi-agent systems, where different agents are affected by different amounts of delay. 
To tackle the adverse effect of feedback delay, having a coordinator to enforce synchronous updates is a solution, but all too often prohibitive due to the extra energy and resources needed, especially when confronted with a tremendous number of agents and their concerns about privacy. 
In view of all these, our paper tries to answer the following questions: 
Can we design a protocol for a class of potential games such that even under large delays, the induced sequence of decisions can converge to \Tblue{the NE set} in a decentralized manner? 
If so, how is the convergence rate of the proposed protocol influenced by the large delays inside the multi-agent systems? 

\textit{Related Work:} 
As for the literature about NE seeking in potential games, \cite{heliou2017learning} focused on potential games with finite sets of actions and proved the convergence properties of the actual sequence of play under semi-bandit and bandit feedback oracles.
Vu et al. \cite{vu2021fast} restricted their attention to routing games and designed two elegant methods tailored to this specific setup that are robust against random noise and admit adaptive step sizes. 
Paccagnan et al. \cite{paccagnan2018nash} proposed a decentralized algorithm to compute Wardrop equilibria of potential games with coupled constraints, where a central coordinator takes care of the update of some global variables such as population averages and the dual variables for global constraints. 
In this work, we examine a general class of potential games that have continuous strategy spaces and design a protocol free of central coordinators.

The rate of convergence has been studied in a vast corpus of literature on solving optimization problems, variational inequalities, and learning in games. 
By conducting a set of time-varying augmentations in the dual space and mapping the results back to the primal space, the optimal accelerated algorithms can achieve a $O(1/k^2)$ rate for optimization problems \cite[Sec.~3.6]{Nesterov1983method}\cite{krichene2015accelerated, su2014differential}. 
In \cite{gao2022continuous}, Gao et al. established an exponential convergence of continuous-time actor-critic dynamics in solving potential games. 
Yet as suggested by \cite{diakonikolas2019approximate}, due to the existence of discretization errors, the discrete-time convergence rate of an algorithm can be very different from its continuous-time counterpart, and our focus here is the discrete-time rate, which is more practical for physical implementations. 

When taking feedback delays into consideration, Hsieh et al. \cite{hsieh2021multi} proposed adaptive strategies to minimize the regrets for multi-agent systems where there exists a sequence of time-varying loss functions and unbounded feedback delays. 
Zhou et al. \cite{zhou2021distributed} analyzed the convergence of asynchronous stochastic gradient descent in the presence of large delays, which happen frequently in parallel computing environments. 
The most relevant work is \cite{heliou20agradient}, where Heliou et al. considered a multi-player setup and introduced a no-regret gradient-free learning policy to strictly monotone games. 
This policy guaranteed convergence to NEs under the restriction that  feedback delays are sublinear. 

\textit{Our Contributions:} 
Our first contribution is to propose an algorithm that achieves a convergence rate of $O(1/k^2)$ \Tblue{measured by the potential function values} if the feedback can be received instantaneously and ensures \Tblue{the convergence} if the delay at each iteration $k$ can be upper bounded by some \Tblue{fraction power} function of $k$. 
The proposed algorithm leverages the improved accelerated gradient descent (AGD+) from \cite{cohen2018acceleration} as the backbone, adapts the centralized AGD+ to the scenario of multi-player potential games, and equips it with a delayed information utilization scheme. 
Then, our theoretical results suggest \Tblue{the feasible step sizes to employ} under different upper bounds for feedback delays, which include sublinear, linear, and superlinear bounds. 
Although the convergence of optimization and online learning algorithms under bounded or sublinear delays has been extensively studied \cite{heliou20agradient, tsitsiklis1986distributed, lian2015asynchronous}, far less has been discussed when it comes to delays beyond those. 
Furthermore, based on properly tuned step sizes, we investigate how the convergence rates of the proposed algorithm depend upon different types of delay bounds. 
\Tblue{Complete proofs of the main statements and some intermediate results are omitted due to the space limit, which are available in \cite{huang2022convergence}.}

\textit{Basic Notations:} 
For a set of vectors $\{v_i\}_{i \in S}$, $[v_i]_{i \in S}$ or $[v_1; \cdots; v_{|S|}]$ denotes their vertical stack. 
For a vector $v$ and a positive integer $i$, $[v]_i$ denotes the $i$-th entry of $v$. 
Denote $\crset{}{} \coloneqq \rset{}{} \cup \{+\infty\}$, $\rset{}{+} \coloneqq [0, +\infty)$, and $\rset{}{++} \coloneqq (0, +\infty)$. 
\Tblue{We let $\langle,\rangle$ represent the usual dot product, $\norm{\cdot}$ a general norm, and $\norm{\cdot}_*$ its dual.}

\section{PRELIMINARIES}

\subsection{Problem setup}
In this work, we consider an $N$-player game $\mathcal{G}$ which consists of a (finite) set of players $\playerN \coloneqq \{1, \ldots, N\}$. 
Each player has a continuous individual strategy space $\mathcal{X}_i \subseteq \rset{n_i}{}$, and by aggregating over all players, we write $\mathcal{X} \coloneqq \prod_{i \in \playerN} \mathcal{X}_i$ for the strategy space of the whole game. 
The action taken by the $i$-th player is denoted by a vector $x^i \in \mathcal{X}_i$, while the stack of the action vectors of other players is denoted by $x^{-i} = [x^j]_{j \in \mathcal{N}_{-i}} \in \mathcal{X}_{-i} \subseteq \rset{n_{-i}}{}$, 
where 
$\mathcal{N}_{-i} \coloneqq \playerN \backslash \{i\}$, 
$\mathcal{X}_{-i} \coloneqq \prod_{j \in \mathcal{N}_{-i}} \mathcal{X}_j$, 
and $n_{-i} \coloneqq \sum_{j \in \mathcal{N}_{-i}}n_j$. 
Each player $i$ is associated with an objective function $J_i(x^i; x^{-i})$, which is parameterized by the action $x^{-i}$ taken by others and determines player $i$ preferences for one action over the other. 
In other words, each player $i$ aims to optimize its local objective, given the actions taken by other players:
\begin{align}\label{eq:game-setup}
\minimize_{x^i \in \mathcal{X}_i} J_i(x^i; x^{-i}). 
\end{align}

\begin{assumption}\label{asp:objt}
For each player $i \in \playerN$, the objective $J_i$ is continuously differentiable in $x_i \in \mathcal{X}_i$ and $x_{-i} \in \mathcal{X}_{-i}$. 
\end{assumption}

\begin{assumption}\label{asp:fesb-set}
The local \Tblue{strategy space} $\mathcal{X}_i$ of each player $i$ is nonempty, closed, and convex. 
\end{assumption}

\begin{aspprime}{2}\label{asp:fesb-set-bdd}
In addition to the conditions in Assumption~\ref{asp:fesb-set}, each local \Tblue{strategy space} $\mathcal{X}_i$ is also bounded.
\end{aspprime}

One of the most widely used solution concepts in game theory is Nash equilibrium (NE), i.e., action profiles that discourage unilateral deviations, the formal definition of which is given below:
\begin{envdef}
The action profile $x^* \in \mathcal{X}$ is a Nash equilibrium if for all $i \in \playerN$ and every deviation $x^i \in \mathcal{X}_i$,
\begin{align}
    J_i(x_*^i; x_*^{-i}) \leq J_i(x^i; x_*^{-i}). 
\end{align}
\end{envdef}

A commonly used operator in NE seeking problems is pseudogradient $F:\mathcal{X} \to \rset{n}{}$.
It is defined as the Cartesian product of the \Tblue{subgradient $\partial_i J^i = \partial_{x^i} J^i$} for each $i \in \playerN$ \Tblue{or the stack of the partial gradient $\nabla_i J^i = \nabla_{x^i}J^i$ under the smoothness imposed in Assumption~\ref{asp:objt}}, i.e.
\Tblue{
\begin{align}\label{eq:psd-grad}
 F: x \mapsto \prod _{i \in \mathcal{N}} [\partial_{x^i} J^i(x^i; x^{-i})] = [\nabla_{x^i} J^i(x^i; x^{-i})]_{i \in \mathcal{N}}.
\end{align}
}

In the sequel, we will restrict our attention to potential games, a subclass of games that has been studied extensively in the context of congestion, traffic networks, oligopolies, etc. \cite{cohen2018acceleration} 
The problem described in \eqref{eq:game-setup} is called a potential game if it admits a potential function $\Phi: \mathcal{X} \to \rset{}{}$ such that for all player $i \in \playerN$, 
$J_i(x^i; x^{-i}) - J_i(y^i; x^{-i}) = \Phi(x^i, x^{-i}) - \Phi(y^i, x^{-i})$, 
for all $x^{-i} \in \mathcal{X}_{-i}$ and $x^i, y^i \in \mathcal{X}_i$. 
By letting $y^i \to x^i$ in the above equation, \Tblue{we obtain that
$F(x) = \nabla \Phi(x), \forall x \in \mathcal{X}$.} 
As has been discussed in \cite[Thm.~1.3.1]{facchinei2003finite}, showing that there exists a potential function $\Phi$ for a game $\mathcal{G}$ is equivalent to proving the Jacobian matrix of the pseudogradient $F$ is symmetric for all $x \in \mathcal{X}$. 
To regularize our \Tblue{problem}, we focus on a subclass of potential games whose potential functions satisfy the following assumption. 

\begin{assumption}\label{asp:cvx-potfuc}
The game $\mathcal{G}$ considered is a potential game with a convex and $L$-smooth potential function $\Phi$.
\end{assumption}

\subsection{Mirror map and its properties}

In this subsection, we briefly introduce the mirror map, its motivation, the associated concepts, and the related properties. 
For an optimization problem, when the objective function and the constraint set are well-behaved in a Euclidean space, we can apply the projected gradient descent which leverages the $\ell_2$ norm as a distance metric to measure the various quantity of interest. 
Nevertheless, sometimes we have to deal with more general situations in some Banach spaces $\mathcal{B}$'s, which utilize a class of norms not derived from inner products. 
Although it no longer makes sense to do the gradient descent in the primal space, as suggested by \cite[Ch.~3]{nemirovskii1983problem}, we can map the point in the primal space $\mathcal{B}$ to the corresponding dual space $\mathcal{B}^*$, perform the gradient update in the dual space, and map the updated point back to the primal space. 
We refer the interested reader to \cite[Ch.~4]{bubeck2014theory} for a more detailed discussion. 
The mapping from the dual to primal space is called the mirror map, which takes the following form: 
\begin{align}\label{eq:mr-map}
\begin{split}
\nabla \psi^*(z) = \argmax_{x \in \mathcal{X}} \big\{\langle z, x\rangle - \psi(x)\big\}
\end{split}
\end{align}
where $\psi: \mathcal{X} \to \rset{}{}$ is a strongly-convex differentiable function, called a regularizer (or penalty function), which is usually carefully chosen to suit the geometry of the problem; 
$\psi^*$ is the convex conjugate of $\psi(\cdot)$, i.e., $\psi^*(z) = \max_{x \in \mathcal{X}}\big\{ \langle z, x\rangle - \psi(x)\big\}$; 
the expression of $\nabla \psi^*$ denotes the subgradient of $\psi^*$; and \eqref{eq:mr-map} is a corollary of Danskin's Theorem. 
For instance, when optimizing a Lipschitz-continuous and convex function $f$, the mirror descent iteration can be written as $x_{k+1} \in \nabla \psi^*(\nabla\psi(x_k) - \nabla f(x_k))$ \cite[Sec.~4.2]{bubeck2014theory}\cite{juditsky2022unifying}. 
Below we include a property of the mirror map, which we will repeatedly leverage in our proof of the main results. 
\begin{lemma}
If $\psi$ is differentiable and $\mu$-strongly convex w.r.t. the norm $\norm{\cdot}$, then $\nabla \psi^*$ is \Tblue{$(1/\mu)$-Lipschitz}, i.e., 
$\norm{\nabla \psi^*(x) - \nabla \psi^*(x^\prime)} \leq \frac{1}{\mu} \norm{x - x^\prime}_*, \forall x, x^\prime \in \mathcal{B}^*$.
\end{lemma}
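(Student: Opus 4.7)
The plan is to invoke the variational characterization of the mirror map given in \eqref{eq:mr-map} and combine the resulting first-order optimality conditions with the strong convexity of $\psi$. Let $u = \nabla\psi^*(x)$ and $u' = \nabla\psi^*(x')$. Since $u$ maximizes $\langle x, \cdot\rangle - \psi(\cdot)$ over $\mathcal{X}$, and $\psi$ is differentiable, the first-order optimality condition states that $x - \nabla\psi(u) \in N_\mathcal{X}(u)$, i.e., $\langle x - \nabla\psi(u), v - u\rangle \leq 0$ for every $v \in \mathcal{X}$. An analogous inequality holds at $u'$ with $x'$ in place of $x$.

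Next, I would specialize these two variational inequalities by choosing $v = u' \in \mathcal{X}$ in the first and $v = u \in \mathcal{X}$ in the second, then add them. After cancellations this yields
\begin{align*}
\langle \nabla\psi(u) - \nabla\psi(u'),\, u - u'\rangle \;\leq\; \langle x - x',\, u - u'\rangle.
\end{align*}
The left-hand side is bounded below by $\mu\,\|u - u'\|^2$ by the $\mu$-strong convexity of $\psi$ with respect to $\|\cdot\|$, while the right-hand side is bounded above by $\|x - x'\|_*\,\|u - u'\|$ using the generalized Cauchy--Schwarz (Hölder) inequality for the norm--dual norm pair. Rearranging gives $\|u - u'\| \leq (1/\mu)\,\|x - x'\|_*$, which is precisely the claimed $(1/\mu)$-Lipschitz property of $\nabla\psi^*$.

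The only real subtlety is justifying that $\nabla\psi^*(x)$ is well-defined as a single point (rather than a set-valued subgradient of $\psi^*$) so that the two variational inequalities above can be written at the concrete points $u,u'\in\mathcal{X}$. This follows from strong convexity of $\psi$: the maximizer in \eqref{eq:mr-map} is unique, and by Danskin's theorem $\psi^*$ is differentiable with $\nabla\psi^*(x)$ equal to that unique maximizer. Beyond this, everything is a routine application of convex duality, so I expect no serious obstacle.
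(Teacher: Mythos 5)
Your proof is correct. The paper itself does not argue this lemma at all---it simply cites \cite[Prop.~14.2]{BauschkeHeinzH2017CAaM}---so your self-contained derivation is a welcome substitute rather than a divergence. The chain of reasoning is sound: uniqueness of the maximizer in \eqref{eq:mr-map} follows from strong concavity of $v \mapsto \langle x, v\rangle - \psi(v)$, the two variational inequalities specialized at $v = u'$ and $v = u$ and summed give $\langle \nabla\psi(u) - \nabla\psi(u'), u - u'\rangle \leq \langle x - x', u - u'\rangle$, the left-hand side is at least $\mu\norm{u - u'}^2$ by adding the two strong-convexity inequalities (this monotonicity bound is valid for an arbitrary norm, not just the Euclidean one, which is the only point worth double-checking here), and the right-hand side is at most $\norm{x - x'}_*\norm{u - u'}$ by the definition of the dual norm. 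Dividing through by $\norm{u - u'}$ (the case $u = u'$ being trivial) gives the claim. This is essentially the standard proof underlying the cited proposition, so there is no substantive gap; if you wanted to be fully pedantic you could add one line noting that the maximizer exists (strong convexity of $\psi$ makes $\psi(v) - \langle x, v\rangle$ coercive on the closed convex set $\mathcal{X}$), but that is a formality.
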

\begin{proof}
It directly follows from \cite[Prop.~14.2]{BauschkeHeinzH2017CAaM}. 
\end{proof}
\Tblue{The Bregman divergence associated with $\psi$ is defined as: }
\begin{align}
    D_{\psi}(x, x^\prime) \coloneqq \psi(x) - \psi(x^\prime) - \langle \nabla \psi(x^\prime), x - x^\prime\rangle,
\end{align}
for all $x, x^\prime \in \mathcal{X}$. 
One typical non-Euclidean example is \Tblue{the probability simplex}, where the objective function is defined on the simplex $\Delta_n \coloneqq \{x \in \rset{n}{+}: \sum_{i=1}^n [x]_i = 1\}$ \cite[Ch.~4.3]{bubeck2014theory}. 
A proper regularizer would be the negative entropy $\psi(x) \coloneqq \sum_{i=1}^n [x]_i \log([x]_i)$, which is $1$-strongly convex w.r.t. $\ell_1$ norm on $\Delta_n$ but not smooth (its gradient is not Lipschitz continuous at the boundary). 
The resulting mirror map admits the following analytical formulation: 
$\nabla \psi^*(x) = \frac{1}{\sum_{i=1}^n \exp([x]_i)} \cdot [\exp([x]_i)]_{i \in \{1, \ldots, n\}}$.






\section{NASH EQUILIBRIUM SEEKING WITH INSTANTANEOUS FEEDBACKS}

We begin with the blanket assumption that each player has instantaneous access to the perfect estimate of its partial gradient queried at the actions that all players choose. 
For a centralized optimization problem, \cite{cohen2018acceleration} proposes an improved accelerated gradient descent, named AGD+, which generalizes Nesterov's AGD \cite{Nesterov1983method}\cite[Sec.~3.6]{bubeck2014theory} and converges at the rate of $O(1/k^2)$. 
Besides, the convergence behaviors of AGD+ are analyzed in \cite{cohen2018acceleration} when the feedback is generated by some noisy and inexact first-order oracles. 
Here, we assume each player $i \in \playerN$ is endowed with a $\mu_i$-strongly convex regularizer $\psi_i: \mathcal{X}_i \to \rset{}{}$ and agrees on the step sizes for iterations. 
Let $\mu_* \coloneqq \min\{\mu_i: i \in \playerN\}$.
Based on AGD+, we propose Algorithm~\ref{alg:updt-no-delay}, \Tblue{which enables the group action profile's potential function value to asymptotically reach the minimum possible level}. 
We denote the action taken by player $i$ at the $k$-th iteration and the two associated auxiliary variables as $x^i_k$, $y^i_k$, and $z^i_k$. 
For brevity, we also let $x_k \coloneqq [x^i_k]_{i \in \playerN}$; similarly for $y_k$ and $z_k$. 
The positive step size sequences are denoted by $(a_k)_{k \in \nset{}{}}$ and we let $A_k \coloneqq \sum_{t=1}^k a_t$. 

\begin{algorithm}
\SetAlgoLined
\caption{Nash Equilibrium Seeking with Instantaneous Feedback (Player $i$)}
\label{alg:updt-no-delay}
\textbf{Initialize:} $x^i_{0} = x^i_{1} \in \mathcal{X}_i$ arbitrarily, $z^i_{0} = \nabla \psi^i(x^i_{0})$, $y^i_{0} = 0$, $g^i_{1} = \nabla_i J_i(x^i_{1};x^{-i}_{1})$, $A_0 = 0, a_k > 0$, and $A_k = \sum_{t=1}^k a_t$ for $k \in \nset{}{+}$\;
\textbf{At the $k$-th iteration ($k \in \nset{}{+}$)}: \\
\qquad $z^i_{k} \leftarrow z^i_{k-1} - a_kg^i_{k}$\;
\qquad $y^i_{k} \leftarrow \frac{A_{k-1}}{A_k}y^i_{k-1} + \frac{a_k}{A_k}\nabla \psi^*_i(z^i_{k})$\;
\qquad $x^i_{k+1} \leftarrow \frac{A_k}{A_{k+1}}y^i_{k} + \frac{a_{k+1}}{A_{k+1}}\nabla \psi^*_i(z^i_{k})$\;
\qquad Take action $x^i_{k+1}$ and receive the feedback $g^i_{k+1} = \nabla_i J_i(x^i_{k+1}; x^{-i}_{k+1})$ from the first-order oracle\;
\textbf{Return:} $\{y^i_{k}\}_{i \in \playerN}$.
\end{algorithm}

\begin{theorem}\label{thm:convg-rt-no-delay}
Consider an NE seeking problem in \eqref{eq:game-setup} with the players of the game following Algorithm~\ref{alg:updt-no-delay}. 
Suppose that Assumptions~\ref{asp:objt}, \ref{asp:fesb-set}, and \ref{asp:cvx-potfuc} hold, and $a_k$ is properly chosen such that $(a_k)^2/A_k \leq \mu_*/L$. 
Then at each iteration $k$ ($k \in \nset{}{+}$), 
\begin{align}
    \Phi(y_k) - \Phi(x_*) \leq \frac{D_{\psi}(x_*, x_0)}{A_k},
\end{align}
where $x_*$ is an arbitrary minimizer of the potential $\Phi$. 
\end{theorem}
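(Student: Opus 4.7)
The plan is to mimic the standard Lyapunov-function argument for AGD+ from \cite{cohen2018acceleration}, exploiting the key fact that, since $\mathcal{G}$ is potential, the stacked pseudogradient is exactly $\nabla \Phi$: setting $g_k := [g_k^i]_{i \in \playerN}$, we have $g_k = F(x_k) = \nabla \Phi(x_k)$, so the concatenated dynamics in Algorithm~\ref{alg:updt-no-delay} are equivalent to centralized AGD+ applied to $\Phi$ under the joint regularizer $\psi(x) := \sum_{i \in \playerN} \psi_i(x^i)$, which is $\mu_*$-strongly convex with respect to the product norm. This immediate reduction is what makes the proof possible; the remaining work is a careful discrete energy estimate.

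First I would introduce the mirror iterate $\hat{x}_k := \nabla \psi^*(z_k)$, noting that $\hat{x}_0 = x_0$ by the initialization $z_0 = \nabla \psi(x_0)$, and define the Lyapunov function
\begin{align*}
\phi_k \;:=\; A_k\bigl(\Phi(y_k) - \Phi(x_*)\bigr) + D_{\psi}(x_*, \hat{x}_k).
\end{align*}
The goal is to prove $\phi_k \leq \phi_{k-1}$ for every $k \geq 1$, then telescope. Since $A_0 = 0$ and $\hat{x}_0 = x_0$, this yields $A_k(\Phi(y_k) - \Phi(x_*)) \leq \phi_0 = D_\psi(x_*, x_0)$, which is precisely the claim.

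Next I would collect the four ingredients needed for the one-step decrease. (a) The mirror-prox three-point inequality applied to $z_k = z_{k-1} - a_k \nabla \Phi(x_k)$ gives
$D_\psi(x_*, \hat{x}_k) - D_\psi(x_*,\hat{x}_{k-1}) \leq -\tfrac{\mu_*}{2}\|\hat{x}_k - \hat{x}_{k-1}\|^2 - a_k\langle \nabla\Phi(x_k), \hat{x}_k - x_*\rangle$,
using the $\mu_*$-strong convexity of $\psi$. (b) Convexity of $\Phi$ gives the two bounds $-\Phi(x_*) \leq -\Phi(x_k) + \langle \nabla\Phi(x_k), x_k - x_*\rangle$ and $-\Phi(y_{k-1}) \leq -\Phi(x_k) + \langle \nabla\Phi(x_k), x_k - y_{k-1}\rangle$. (c) The defining recursion $x_k = \tfrac{A_{k-1}}{A_k}y_{k-1} + \tfrac{a_k}{A_k}\hat{x}_{k-1}$ produces the algebraic identity $A_{k-1}(x_k - y_{k-1}) = a_k(\hat{x}_{k-1} - x_k)$, while $y_k = \tfrac{A_{k-1}}{A_k} y_{k-1} + \tfrac{a_k}{A_k}\hat{x}_k$ yields $y_k - x_k = \tfrac{a_k}{A_k}(\hat{x}_k - \hat{x}_{k-1})$. (d) The $L$-smoothness of $\Phi$ then bounds $\Phi(y_k) - \Phi(x_k)$ by $\langle \nabla\Phi(x_k), y_k - x_k\rangle + \tfrac{L}{2}\|y_k - x_k\|^2$.

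Substituting (a)--(d) into $\phi_k - \phi_{k-1}$, the linear inner-product terms cancel by the identity in (c): both $A_{k-1}\langle \nabla\Phi(x_k), x_k - y_{k-1}\rangle + a_k\langle \nabla\Phi(x_k), x_k - \hat{x}_k\rangle$ and $a_k\langle \nabla\Phi(x_k), \hat{x}_k - \hat{x}_{k-1}\rangle$ collapse to telescoping pieces, leaving only the quadratic residual
\begin{align*}
\phi_k - \phi_{k-1} \;\leq\; \left(\frac{L a_k^2}{2A_k} - \frac{\mu_*}{2}\right)\|\hat{x}_k - \hat{x}_{k-1}\|^2.
\end{align*}
The hypothesis $a_k^2/A_k \leq \mu_*/L$ makes this non-positive, finishing the induction. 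The main technical obstacle is the bookkeeping step: correctly identifying how (c) forces the inner products to cancel so that the only remaining term is the quadratic in $\|\hat{x}_k - \hat{x}_{k-1}\|^2$; the rest of the argument is a transparent reduction plus the strong-convexity/smoothness toolkit.
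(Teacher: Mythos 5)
Your proposal is correct and proves the same one-step inequality that drives the paper's argument, but it packages the bookkeeping differently. The paper imports the approximate-duality-gap machinery of Cohen et al.: it defines $G_k = U_k - L_k$ with the lower bound $L_k$ anchored at $x_0$ via a $\min_u\{\cdots + D_\psi(u,x_0)\}$ term, invokes \cite[Lemma~3.2]{cohen2018acceleration} with $\eta_k = 0$ to get $A_kG_k - A_{k-1}G_{k-1} \le E^e_k$, and bounds $E^e_k$ by $\bigl(\tfrac{L(a_k)^2}{2A_k} - \tfrac{\mu_*}{2}\bigr)\|v_k - v_{k-1}\|^2 \le 0$ before telescoping $G_k \le \tfrac{A_1}{A_k}G_1 \le \tfrac{D_\psi(x_*,x_0)}{A_k}$. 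You instead run the classical estimate-sequence/Lyapunov argument with $\phi_k = A_k(\Phi(y_k)-\Phi(x_*)) + D_\psi(x_*,\hat{x}_k)$; your algebraic identities in (c) and the cancellation of the linear terms are exactly right, and the quadratic residual and step-size condition coincide with the paper's. The two decompositions are equivalent (the paper's $A_kG_k$ is, after evaluating the $\min_u$, your $\phi_k$ up to an additive constant), so neither buys more generality here; the paper's version has the advantage of extending verbatim to the delayed case via the $E^\eta_k$ term, which is why the authors adopt it. One technical point you should make explicit: because the algorithm uses the lazy (dual-averaging) step $z_k = z_{k-1} - a_k\nabla\Phi(x_k)$, $\hat{x}_k = \nabla\psi^*(z_k)$ with a constrained mirror map, $z_{k-1}$ is in general only a subgradient of $\psi + \iota_{\mathcal{X}}$ at $\hat{x}_{k-1}$ rather than $\nabla\psi(\hat{x}_{k-1})$, so the three-point inequality in your step (a) must be stated for the Bregman-like divergence built from $z_{k-1}$ (or one must verify it directly from the optimality condition of the $\argmax$ defining $\hat{x}_k$). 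This is a standard fix and does not affect the conclusion, but as written your (a) silently identifies $\nabla\psi(\hat{x}_{k-1})$ with $z_{k-1}$.
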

\begin{proof}
See Appendix~\ref{pf:convg-rt-no-delay}. 
\end{proof}


\section{NASH EQUILIBRIUM SEEKING WITH FEEDBACK DELAYS}\label{sec:alg-delay}

\subsection{The general framework of delayed rewards}

In this section, we consider the environments where the feedback cannot be immediately received due to inherent delays, communication latency, etc. 
At the $k$-th stage of the process, each player $i$ chooses an action $x^i_k$, and $x^i_k$ together with the actions taken by other players are then observed by a perfect first-order oracle, which generates the exact gradient feedback $g^i_k$, by our standing assumption. 
However, unlike the setup in the previous section, a delay $d^i_k \geq 0$ is triggered and player $i$ cannot receive $g^i_k$ until the $(k + d^i_k)$-th stage. 
As a result, the feedback information available for players to update their next action may be obsolete with respect to the current state. 
In addition, it is worth emphasizing that $d^i_k$ \Tblue{varies} across different player $i$ at the iteration stage $k$ and can grow unbounded. 
To regularize the feedback delays that these players suffer from, we make the following assumption concerning the upper bounds of delays. 
\begin{assumption}\label{asp:delay}
For all players $i \in \playerN$, the delay $d^i_{k}$ for the first order feedback $g^i_{k}$ ($k \in \nset{}{+}$) satisfies $d^i_{k} \leq D \cdot k^\alpha$ for some $\alpha \geq 0$ and $D > 0$. 
\end{assumption}

Due to the flexibility of $d^i_k$, the arrival time of feedback is irregular, i.e., for some stages, player $i$ \Tblue{receives no feedback from} the past actions, while for some \Tblue{other} stages, several feedback messages originating from different earlier stages arrive at the same time. 
Heliou et al. \cite{heliou20agradient} let each player be endowed with a feedback 
(priority) queue and \Tblue{the received feedback is dequeued} in ascending priority order. 
To be more specific, at each iteration, player $i$ will append the feedback received to the queue and then use the feedback \Tblue{with the earliest timestamp} to do the updates. 
If the feedback queue is empty at some stages, this player will keep its action unchanged before it receives new feedback from the oracle.

\subsection{Nash equilibrium seeking with reward delays}

A variant of Algorithm~\ref{alg:updt-no-delay} is designed to \Tblue{contend with} the feedback delays of players by properly tuning the step sizes, as illustrated in Algorithm~\ref{alg:updt-delay}. 
The delayed feedback utilization scheme leveraged in Algorithm~\ref{alg:updt-delay} differs from that in \cite{heliou20agradient} in the sense that at each iteration, players update their action using the available first-order feedback with the highest priority repeatedly and neglect all the other feedback with low priority. 
Another difference is that, under this scheme, when no feedback is received for some iterations, player $i$ will apply the most recent first-order feedback message (in terms of the stage that it originates from) in history to update its auxiliary variables and action. 
\Tblue{For each player $i$, let $s^i(k) \in \nset{}{+}$ denote 
 the iteration from which the feedback used to update the $k$-th stage originates, and by $g^i_{s^i(k)} \coloneqq \nabla_{x^i} J_i(x^i_{s^i(k)}; x^{-i}_{s^i(k)})$ the corresponding first-order feedback. }
We have the following lemma to characterize the property of $s^i(k)$ and show that the gap between $s^i(k)$ and $k$ is upper bounded by some controllable length. 
\begin{lemma}\label{le:delay-bd}
If for each player $i$ at the $k$-th iteration ($\forall k \in \nset{}{+}$), the delay $d^i_k$ satisfies $d^i_k \leq Dk^\alpha$ for some $\alpha \geq 0$ and $D > 0$, then $s^i(k) + 1 + D(s^i(k) + 1)^\alpha > k, \forall k \in \nset{}{+}$. 
\end{lemma}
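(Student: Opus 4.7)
The plan is to unpack the bookkeeping in the delayed-feedback scheme and then observe that the bound we want is essentially just the definition of ``most recent feedback received'' combined with the delay assumption.

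First I would pin down precisely what $s^i(k)$ is: under the scheme described just before the lemma, the feedback generated at stage $t$ is received by player $i$ at stage $t + d^i_t$, and at stage $k$ the player uses the feedback from the latest stage whose feedback has already arrived. Thus $s^i(k)$ is the largest index $t$ satisfying $t + d^i_t \le k$ (equivalently, the largest $t$ such that the feedback from stage $t$ is in the player's history by stage $k$). In particular, $s^i(k) \le k$ always.

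Next I would split into two easy cases. If $s^i(k) + 1 > k$, i.e., $s^i(k) = k$, then $s^i(k) + 1 + D(s^i(k)+1)^\alpha \ge k + 1 > k$ and we are done. Otherwise $s^i(k) + 1 \le k$, so $s^i(k) + 1$ is a legitimate stage index. By the \emph{maximality} of $s^i(k)$, the feedback from stage $s^i(k)+1$ has \emph{not} yet arrived by stage $k$, which means
\begin{equation*}
(s^i(k)+1) + d^i_{s^i(k)+1} \;>\; k.
\end{equation*}
Invoking Assumption~\ref{asp:delay} at index $s^i(k)+1$, namely $d^i_{s^i(k)+1} \le D\,(s^i(k)+1)^\alpha$, and substituting into the inequality above yields $s^i(k) + 1 + D(s^i(k)+1)^\alpha > k$, as desired.

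The only genuine subtlety, and the place where I would be most careful in writing out the details, is justifying the characterization of $s^i(k)$ against the verbal description of the priority queue and the ``when no feedback arrives'' clause — ensuring that ``most recent feedback in history'' really coincides with $\max\{t : t+d^i_t \le k\}$, and that the off-by-one conventions (feedback from $x^i_t$ being generated at the end of stage $t$ and usable at stage $t+d^i_t$) line up with the indices that appear in the statement $s^i(k)+1+D(s^i(k)+1)^\alpha > k$. Everything after that is a single-line application of the delay bound.
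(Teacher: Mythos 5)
Your proof is correct and is essentially the paper's argument in contrapositive form: both hinge on the observation that if $s^i(k)+1+D(s^i(k)+1)^\alpha \le k$ held, then $s^i(k)+1+d^i_{s^i(k)+1}\le k$ and the feedback from stage $s^i(k)+1$ would already have arrived by stage $k$, contradicting the definition of $s^i(k)$ as the most recent usable stage. Your explicit handling of the edge case $s^i(k)=k$ and the characterization of $s^i(k)$ via maximality are just slightly more careful renderings of the same one-step argument.
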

\begin{proof}
Assume ad absurdum that $s^i(k) + 1 + D(s^i(k) + 1)^{\alpha} \leq k$. 
Then $s^i(k) + 1 + d^i_{s^i(k) + 1} \leq k$ and the feedback message generated from the actions taken at the $(s^i(k) + 1)$-th stage would arrive before or at the $k$-th iteration. 
The first-order information $g^i_{s^i(k)}$ would be discarded at an earlier stage or would not be accepted at all with the more recent feedback message $g^i_{s^i(k)+1}$ in the cache, which generates an immediate contradiction. 
\end{proof}

By carefully controlling the interplay between delays and step sizes, we can establish the relationship between the convergence rates that Algorithm~\ref{alg:updt-delay} can achieve and the powers of the upper bounds for delays as follows. 

\begin{theorem}\label{thm:convg-rt-delay}
Consider an NE seeking problem in \eqref{eq:game-setup} with the players of the game following Algorithm~\ref{alg:updt-delay}. 
Suppose that Assumptions~\ref{asp:objt}, \ref{asp:fesb-set-bdd}, and \ref{asp:cvx-potfuc} hold, the delays affecting the algorithm are under Assumption~\ref{asp:delay} with $\alpha < 1$, and $a_k = a_0\cdot k^\beta \;(0 \leq \beta \leq 1)$ is properly chosen such that $(a_k)^2/A_k \leq \mu_*/L$. 
Then, 
\begin{align}
    \Phi(y_k) - \Phi(x_*) = O(\frac{1}{k^{1 - \alpha}}),
\end{align}
where $x_*$ is an arbitrary minimizer of the potential $\Phi$. 
\end{theorem}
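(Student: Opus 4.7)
The plan is to adapt the estimate-sequence argument used in the proof of Theorem~\ref{thm:convg-rt-no-delay} to an inexact-gradient setting, bound the perturbation introduced by each delayed feedback using boundedness of $\mathcal{X}$ (Assumption~\ref{asp:fesb-set-bdd}) together with the smoothness of $\Phi$, and then invoke Lemma~\ref{le:delay-bd} to control the accumulated error.

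First, I would rewrite the oracle actually employed by player $i$ at stage $k$ as $g^i_{s^i(k)} = \nabla_i\Phi(x_k) + \epsilon^i_k$, where $\epsilon^i_k \coloneqq \nabla_i\Phi(x_{s^i(k)}) - \nabla_i\Phi(x_k)$ captures the staleness. Stacking across $i$ and re-running the AGD+ argument underlying Theorem~\ref{thm:convg-rt-no-delay} with this perturbed oracle, I expect to obtain a telescoping inequality of the form
\begin{align*}
A_k(\Phi(y_k) - \Phi(x_*)) \leq D_\psi(x_*, x_0) + \sum_{t=1}^k a_t \langle \epsilon_t, u_t \rangle,
\end{align*}
where $u_t$ is an iterate-dependent vector living in the affine span of $\mathcal{X}$. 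By Cauchy-Schwarz together with Assumption~\ref{asp:fesb-set-bdd}, $|\langle \epsilon_t, u_t\rangle|$ is dominated by $\mathrm{diam}(\mathcal{X})\cdot\|\epsilon_t\|_*$.

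The central technical step is to show $\|\epsilon_t\|_* = O(t^{\alpha-1})$. By $L$-smoothness, $\|\epsilon_t\|_* \leq L\,\|x_t - x_{s(t)}\|$. Since $\nabla\psi^*_i(z^i_k)\in\mathcal{X}_i$, and both the $y$-update and the $x$-update in Algorithm~\ref{alg:updt-delay} are convex combinations of iterates lying in $\mathcal{X}$, a direct calculation gives $\|x_{k+1} - x_k\| = O(a_k/A_k)\cdot\mathrm{diam}(\mathcal{X})$. With $a_k = a_0 k^\beta$, one has $A_k = \Theta(k^{1+\beta})$ and hence $a_k/A_k = \Theta(1/k)$. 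Telescoping this increment bound over the window $[s(t), t]$ and using Lemma~\ref{le:delay-bd}, which under $\alpha<1$ forces $t - s(t) = O(t^\alpha)$, I expect $\|x_t - x_{s(t)}\| \leq \sum_{j=s(t)+1}^{t} O(1/j)\cdot\mathrm{diam}(\mathcal{X}) = O(\mathrm{diam}(\mathcal{X})\cdot t^{\alpha-1})$.

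Combining everything, the accumulated error is $\sum_{t=1}^k a_t\|\epsilon_t\|_*\cdot\mathrm{diam}(\mathcal{X}) = O\bigl(\sum_{t=1}^k t^\beta \cdot t^{\alpha - 1}\bigr) = O(k^{\alpha+\beta})$, and after dividing by $A_k = \Theta(k^{1+\beta})$ and absorbing the $D_\psi(x_*,x_0)/A_k = O(1/k^{1+\beta})$ term, the bound $\Phi(y_k) - \Phi(x_*) = O(1/k^{1-\alpha})$ follows, independently of the particular value $\beta\in[0,1]$ (subject to the stability constraint $a_k^2/A_k \leq \mu_*/L$, which for $a_k = a_0 k^\beta$ reduces to a condition on $a_0$ when $\beta=1$ and is automatic for $\beta<1$ and $k$ large). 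The main obstacle I anticipate is the inexact-AGD+ telescoping step: one must carefully trace how a perturbation of the primal-dual update propagates through the mirror map and identify the vector $u_t$ precisely enough that Cauchy-Schwarz does not bleed away a factor of $A_k$ or $k$. A secondary subtlety is that the increment bound $\|x_{k+1}-x_k\| = O(a_k/A_k)\cdot\mathrm{diam}(\mathcal{X})$ genuinely requires the bounded-domain Assumption~\ref{asp:fesb-set-bdd} rather than just Assumption~\ref{asp:fesb-set}, since $\nabla\psi^*(z_k)$ would otherwise not be a priori controllable.
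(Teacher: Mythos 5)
Your proposal is correct and follows essentially the same route as the paper: the perturbed-oracle decomposition with $\eta_t = \nabla\Phi(x_{s(t)}) - \nabla\Phi(x_t)$, the bound $\|x_* - v_t\| \leq D_{\mathcal{X}}$ from Assumption~\ref{asp:fesb-set-bdd}, the increment estimate $\|x_{\ell+1}-x_\ell\| = O\bigl((a_\ell + a_{\ell+1})/A_{\ell+1}\bigr)D_{\mathcal{X}} = O(1/\ell)D_{\mathcal{X}}$, and Lemma~\ref{le:delay-bd} to control the window length, yielding the same $O(k^{\alpha+\beta})/\Theta(k^{1+\beta})$ conclusion. The only (immaterial) difference is that you bound $\|\eta_t\|_*$ directly by $L\|x_t - x_{s(t)}\|$ and telescope the iterate increments, whereas the paper telescopes the consecutive gradient differences $\|g_{\ell+1}-g_\ell\|_*$; these are the same computation rearranged.
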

\begin{proof}
See Appendix~\ref{pf:thm:convg-rt-delay}
\end{proof}
\Tblue{Expanding on Theorem~\ref{thm:convg-rt-delay}, we now delve into the cases where the delay upper bound grows at a linear or superlinear rate and examine the conditions that guarantee convergence. }
\begin{coro}\label{coro:convg-rt-lnr-delay}
Consider an NE seeking problem in \eqref{eq:game-setup} with the players of the game following Algorithm~\ref{alg:updt-delay}. 
Suppose that Assumptions~\ref{asp:objt}, \ref{asp:fesb-set-bdd}, and \ref{asp:cvx-potfuc} hold, the delays affecting the algorithm are under Assumption~\ref{asp:delay} with $\alpha = 1$, and $a_k = a_0\cdot 1/k$ for some $a_0 > 0$. 
Then,
\begin{align}
    \Phi(y_k) - \Phi(x_*) = O(\frac{\log\log(k)}{\log(k)}),
\end{align}
where $x_*$ is an arbitrary minimizer of the potential $\Phi$. 
\end{coro}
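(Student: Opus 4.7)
The plan is to specialize the error analysis used in the proof of Theorem~\ref{thm:convg-rt-delay} to the borderline linear-delay regime $\alpha = 1$ with $a_k = a_0/k$, and to identify the iterated-logarithm cancellation that produces the claimed rate. First I would reuse the master inequality from the proof of Theorem~\ref{thm:convg-rt-delay}, which, after separating the ideal AGD$^+$ descent from the perturbation caused by stale feedback, gives
\begin{align*}
A_k\bigl(\Phi(y_k)-\Phi(x_*)\bigr) \;\leq\; D_{\psi}(x_*,x_0) \;+\; E(k),
\end{align*}
where $E(k)$ collects weighted sums of stale-gradient discrepancies $\|g^i_t - g^i_{s^i(t)}\|_*$. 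Using the $L$-smoothness of $\Phi$ granted by Assumption~\ref{asp:cvx-potfuc} together with the boundedness of $\mathcal X$ supplied by Assumption~\ref{asp:fesb-set-bdd}, each such discrepancy is bounded by $L\|x_t - x_{s^i(t)}\|$ times the diameter of $\mathcal X$, so the remaining task is to control the displacement between the current iterate and the iterate from which the feedback in use originated.

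Second, I would derive a sharp per-iteration displacement bound of the form $\|x_{k+1}-x_k\| \leq C_1\, a_k/A_k$. This is immediate from the AGD$^+$ updates in Algorithm~\ref{alg:updt-delay}: both $y_k$ and $x_{k+1}$ are convex combinations of the previous $y$-iterate and the primal image $\nabla\psi^*(z_k)$, and the two components live in the bounded set $\mathcal X$. Substituting $a_k = a_0/k$ and $A_k = a_0 H_k \sim a_0\log k$ (where $H_k$ denotes the $k$-th harmonic number) yields the crucial scaling $\|x_{k+1}-x_k\| = O\!\left(\tfrac{1}{k\log k}\right)$.

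Third, at $\alpha=1$ Lemma~\ref{le:delay-bd} gives $s^i(k) \geq k/(1+D) - 1$, so telescoping the one-step bound over $j \in \{s^i(t),\ldots,t-1\}$ and applying the classical estimate $\sum_{j=m}^{n} 1/(j\log j) \approx \log\log n - \log\log m$ together with $s^i(t)/t \to 1/(1+D)$ produces
\begin{align*}
\|x_t - x_{s^i(t)}\| \;=\; O\!\left(\tfrac{1}{\log t}\right).
\end{align*}
Plugging this back into $E(k)$ and invoking $\sum_{t=2}^{k} 1/(t\log t) \sim \log\log k$ gives $E(k) = O(\log\log k)$; dividing through by $A_k \sim a_0\log k$ then delivers the advertised $O(\log\log k/\log k)$ rate.

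The principal obstacle is the second step: the displacement estimate must be tight enough to produce the $1/(k\log k)$ scaling, since any looser $O(1/k)$ bound would yield $\|x_t - x_{s^i(t)}\| = O(1)$ and therefore $E(k) = O(\log k)$, wiping out the $A_k$ in the denominator and leaving only an uninformative $O(1)$ bound. Making the sharp bound rigorous hinges on the weighted-average structure of the AGD$^+$ updates and fundamentally relies on the boundedness hypothesis of Assumption~\ref{asp:fesb-set-bdd}; it is precisely this dependence that distinguishes the delay-tolerant Corollary~\ref{coro:convg-rt-lnr-delay} from the instantaneous-feedback Theorem~\ref{thm:convg-rt-no-delay}.
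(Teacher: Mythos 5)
Your proposal follows essentially the same route as the paper's proof: the same duality-gap decomposition, the same one-step displacement bound $\norm{x_{\ell+1}-x_{\ell}} \leq D_{\mathcal{X}}(a_{\ell}+a_{\ell+1})/A_{\ell+1}$ (this is exactly the paper's Lemma~\ref{le:grad-bd}), the same telescoping over $\ell \in \{s^i(t)+1,\dots,t\}$ combined with $\int \frac{d\ell}{\ell\log\ell} = \log\log\ell$ and the lower bound $s^i(t)+1 \geq t/(D+1)$ from Lemma~\ref{le:delay-bd}, and the same final division by $A_k \geq a_0\log k$. The one point your master inequality glosses over is that Corollary~\ref{coro:convg-rt-lnr-delay} does not assume $(a_k)^2/A_k \leq \mu_*/L$, so the descent terms $E^e_t$ from Lemma~\ref{le:cohen} are not automatically nonpositive at early iterations; the paper handles this by observing that only finitely many of them can be positive (since $a_0/(k^2\log k)$ eventually falls below $\mu_*/L$) and bounding their sum by a constant $E^e_*$, which is then absorbed into the $O(1/A_k)$ term --- a one-line fix you should add.
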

\begin{proof}
See Appendix~\ref{pf:thm:convg-rt-spl-delay}
\end{proof}

\begin{coro}\label{coro:convg-rt-sup-delay}
Consider an NE seeking problem in \eqref{eq:game-setup} and the players of the game follow Algorithm~\ref{alg:updt-delay}. 
Suppose that Assumptions~\ref{asp:objt}, \ref{asp:fesb-set-bdd}, and \ref{asp:cvx-potfuc} hold, the delays affecting the algorithm are under Assumption~\ref{asp:delay} with $\alpha > 1$.
The sequence $(a_k)_{k \in \nset{}{+}}$ satisfies $a_k = a_0/((k+1)\log (k+1))$ for some $a_0 > 0$. 
Then,
\begin{align}
    \Phi(y_k) - \Phi(x_*) = O(\frac{\log\log\log(k)}{\log\log(k)}),
\end{align}
where $x_*$ is an arbitrary minimizer of the potential $\Phi$. 
\end{coro}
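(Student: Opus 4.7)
The argument will follow the template of Theorem~\ref{thm:convg-rt-delay} and Corollary~\ref{coro:convg-rt-lnr-delay}, calibrated to the step-size schedule $a_k=a_0/((k+1)\log(k+1))$ chosen to tolerate the superlinear delay growth. The plan is to quantify the cumulative step size, reuse the energy/Lyapunov inequality derived for the delayed algorithm, and then carefully estimate the delay-induced error using the iterated-logarithm structure of $a_k$.

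First, by the antiderivative identity $\int d\tau/(\tau\log\tau)=\log\log\tau$, one obtains $A_k=a_0\log\log(k)(1+o(1))$, so $1/A_k\asymp 1/\log\log(k)$. The admissibility condition $a_k^2/A_k\le\mu_*/L$ from Theorem~\ref{thm:convg-rt-no-delay} is decreasing in $k$ and can be ensured for every $k\ge1$ by taking $a_0$ small enough relative to $\mu_*/L$. Next, I would reuse the per-stage energy inequality underlying the proof of Theorem~\ref{thm:convg-rt-delay} to arrive, after telescoping, at
\begin{equation*}
A_k\bigl(\Phi(y_k)-\Phi(x_*)\bigr)\le D_\psi(x_*,x_0)+\sum_{t=1}^{k}a_t\sum_{i\in\playerN}\bigl\langle g^i_{s^i(t)}-g^i_t,\;w^i_t\bigr\rangle,
\end{equation*}
where $w^i_t$ is a difference of iterates that lies in a bounded set thanks to Assumption~\ref{asp:fesb-set-bdd}. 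Cauchy--Schwarz together with $L$-smoothness of $\Phi$ then reduces the delay contribution to controlling $\sum_{t=1}^k a_t\,\|x_{s^i(t)}-x_t\|$.

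Since the updates $y^i_k$ and $x^i_{k+1}$ are convex combinations of a previous $y^i$ with a mirror image in the bounded set $\mathcal{X}$, one obtains $\|x_{\tau+1}-x_\tau\|\le C\,a_\tau/A_\tau\asymp 1/(\tau\log(\tau)\log\log(\tau))$, whence
\begin{equation*}
\|x_{s^i(t)}-x_t\|\;\lesssim\;\int_{s^i(t)}^{t}\!\frac{d\tau}{\tau\log(\tau)\log\log(\tau)}\;=\;\log\log\log(t)-\log\log\log(s^i(t)).
\end{equation*}
By Lemma~\ref{le:delay-bd}, $s^i(t)+1\ge (t/D)^{1/\alpha}$ for $t$ large, and unrolling three nested logarithms gives $\log\log\log(t)-\log\log\log(s^i(t))=O(1/\log\log(t))$, because the $\log\alpha$ picked up after the second logarithm is divided by $\log\log(t)$ after the third. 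Summing,
\begin{equation*}
\sum_{t=1}^{k}a_t\,\|x_{s^i(t)}-x_t\|\;\lesssim\;\sum_{t=1}^{k}\frac{1}{(t+1)\log(t+1)\log\log(t)}\;\asymp\;\log\log\log(k),
\end{equation*}
and dividing by $A_k\asymp\log\log(k)$ yields the claimed rate $O(\log\log\log(k)/\log\log(k))$, which dominates the undelayed contribution $D_\psi(x_*,x_0)/A_k\asymp 1/\log\log(k)$.

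The step I expect to be most delicate is the triple-iterated-logarithm bookkeeping: the rate hinges on the expansion $\log\log\log(t)-\log\log\log(s^i(t))\asymp 1/\log\log(t)$, which in turn requires $s^i(t)$ to be no smaller than $(t/D)^{1/\alpha}$. The content of Lemma~\ref{le:delay-bd} under Assumption~\ref{asp:delay} is precisely what furnishes this lower bound and salvages the rate; if $s^i(t)$ could be as small as $O(1)$, the $1/\log\log t$ factor would degrade and the summation would not close. Keeping $a_k^2/A_k\le\mu_*/L$ satisfied uniformly in $k$ (rather than only asymptotically) is a secondary calibration issue that is handled by shrinking $a_0$ and does not affect the final rate.
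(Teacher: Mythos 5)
Your proposal is correct and follows essentially the same route as the paper's proof: bound $A_k \gtrsim \log\log k$, control the delay-induced error via consecutive-iterate increments of order $a_\ell/A_\ell \asymp 1/(\ell\log\ell\log\log\ell)$, and invoke Lemma~\ref{le:delay-bd} to get $s^i(t)+1 \ge (t/(D+1))^{1/\alpha}$ so that the triple-logarithm difference is $O(1/\log\log t)$, after which summing against $a_t$ and dividing by $A_k$ gives the stated rate. The only minor divergence is that the paper handles the energy terms $E^e_t$ by absorbing the finitely many positive ones into a constant $E^e_*$ (so arbitrary $a_0>0$ is allowed), whereas you enforce $a_k^2/A_k\le\mu_*/L$ uniformly by shrinking $a_0$; this does not affect the asymptotic rate.
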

\begin{proof}
See Appendix~\ref{pf:thm:convg-rt-spl-delay}
\end{proof}

\begin{algorithm}
\SetAlgoLined
\caption{Nash Equilibrium Seeking with Delayed Feedback (Player $i$)}
\label{alg:updt-delay}
\textbf{Initialize:} $x^i_{0} = x^i_{1} \in \mathcal{X}_i$ arbitrarily, $z^i_{0} = \nabla \psi^i(x^i_{0})$, $y^i_{0} = 0$, $g^i_\star = g^i_{1} = \nabla_i J_i(x^i_{1};x^{-i}_{1})$, $s^i(0) = 1$, 
$A_0 = 0, a_k > 0$, and $A_k = \sum_{t=1}^k a_t$ for $k \in \nset{}{+}$\;
\textbf{At the $k$-th iteration ($k \in \nset{}{+}$)}: \\
Receive $\mathcal{R}^i_k \coloneqq \{(t, g^i_t): k-1 < t + d^i_t \leq k\}$\;
$s^i(k) \leftarrow s^i(k-1)$\;
\If{$\mathcal{R}^i_k \neq \varnothing$}{
Obtain the most recent pair $(t_{\max}, g^i_{t_{\max}})$\;
\If{$t_{\max} > s^i(k)$}{
$s^i(k) \leftarrow t_{\max}$, $g^i_\star \leftarrow g^i_{s^i(k)}$\;
}
}
$z^i_k \leftarrow z^i_{k-1} - a_kg^i_\star$ \;
$y^i_{k} \leftarrow \frac{A_{k-1}}{A_k}y^i_{k-1} + \frac{a_k}{A_k}\nabla \psi^*_i(z^i_{k})$\;
$x^i_{k+1} \leftarrow \frac{A_k}{A_{k+1}}y^i_{k} + \frac{a_{k+1}}{A_{k+1}}\nabla \psi^*_i(z^i_{k})$\;
Take action $x^i_{k+1}$; the feedback message $(k+1, g^i_{k+1})$ with $g^i_{k+1} \coloneqq \nabla_i J_i(x^i_{k+1}; x^{-i}_{k+1})$ will arrive at the $(k+1+d^i_{k+1})$-th iteration\;
\textbf{Return:} $\{y^i_{k}\}_{i \in \playerN}$.
\end{algorithm}

\section{CASE STUDY AND NUMERICAL SIMULATIONS}
\subsection{Problem setup of routing games}
Due to the significant economic costs that it causes, traffic congestion is a well-recognized issue, especially in major metropolitan centers \cite{paccagnan2018nash}. 
In this study, we evaluate the performance of the proposed algorithm with a class of routing games.
In this game, a directed graph $(\mathcal{V}, \mathcal{E})$ is given, with the node set $\mathcal{V} \coloneqq \{1, \ldots, V\}$ denoting geographical locations and the directed edge set $\mathcal{E} \coloneqq \{1, \ldots, E\} \subseteq \mathcal{V} \times \mathcal{V}$ denoting the available roads connecting different locations.  
Each player $i \in \playerN$ represents a driver or a group of drivers who share a pre-determined original-destination (O/D) pair $(o^i, d^i) \in \mathcal{V} \times \mathcal{V}$, and it needs to determine a route to minimize the travel time, which is at the same time influenced by the routes chosen by other players. 

Same as the basic setup in \cite{vu2021fast}, the routing game considered here consists of the following three essential components: the structure of the road network, the congestion costs, and the feasible flow profiles. 
For the part of the network structure, in addition to the setup given above, for each player $i$, there exist a set of feasible routes joining $o^i$ and $d^i$, which is denoted by $\mathcal{P}^i$. 
The cardinality of $\mathcal{P}^i$ is represented by $P^i$. 
Given the road network, the strategy space of player $i$ can be formally described by a scaled simplex $\mathcal{X}^i \coloneqq \{x^i \in \rset{P^i}{+}: \sum_{p=1}^{P^i} [x^i]_p = S_i\}$, with $S_i$ representing the traffic demand of player $i$ and each entry $[x^i]_p$ representing the scaled probability of choosing route $p$. 
Finally, we use $J^i_p:\mathcal{X}^i \to \rset{}{+}$ to denote the congestion cost functions that measure latency associated with route $p \in \mathcal{P}^i$ for each player $i \in \playerN$. 
The formal expression of $J^i_p$ can be written as $J^i_p(x^i; x^{-i}) = \sum_{e \in p} J_e(\ell_e(x^i, x^{-i}))$. 
In the above, $\ell_e(x) = \sum_{i \in \playerN}\sum_{p \in \mathcal{P}^i} \mathds{1}_{\{e \in p\}}[x^i]_p$ reflects the traffic load on edge/road $e$ under the collective action profile $x \coloneqq [x^i]_{i \in \playerN}$. 
The road impedance function or edge cost function $J_e$ for each $e \in \mathcal{E}$ relates the travel time to the traffic load and the road condition, the typical examples of which include the Bureau of Public Roads (BPR) function $J_e(\ell) = a_e(1 + b_e(\ell / c_e)^{r_e})$ with the road condition characterized by the parameters $a_e$, $b_e$, $c_e$, and $r_e$.
We refer the interested reader to \cite[Sec.~2]{vu2021fast} and references therein for further details and other conventional examples of $J_e$. 
\Tblue{Instead of considering the NE as before, we now focus on the Wardrop equilibrium, which serves as an alternative solution concept. }
\begin{envdef}
(Wardrop equilibrium) The action profile $x_* \in \mathcal{X}$ is a Wardrop equilibrium if for all $i \in \playerN$ and arbitrary $p, q \in \mathcal{P}^i$, 
$ [x^i_*]_p > 0 \implies J^i_p(x^i_*; x^{-i}_*) \leq J^i_q(x^i_*; x^{-i}_*)$ . 
\end{envdef}

See \cite[Def.~2]{paccagnan2018nash}\cite[Sec.~22.2.2 \& Sec.~22.2.6]{NisaRougTardVazi07} for the definitions of Wardrop equilibria under different problem setups. 
The motivation to consider Wardrop equilibria rather than Nash equilibria here is that the former admits a potential function $\Phi$ such that the minimizers of $\Phi$ coincide with the Wardrop equilibria of the congestion game. 
The potential function $\Phi$, also known as the Beckmann-McGuire–Winsten (BMW) potential \cite[Sec.~3.1]{beckmann1956studies}, is of the form
$\Phi(x) \coloneqq \sum_{e \in \mathcal{E}} \int^{\ell_e(x)}_0 J_e(\tau)d\tau, \;\text{where}\;x \in \prod_{i \in \playerN} \mathcal{X}^i$. 
In the deterministic setting, it directly follows from the Leibniz integral rule that the gradient is given by 
$\frac{\partial \Phi(x)}{\partial [x^i]_p} = \sum_{e\in \mathcal{E}} J_e(\ell_e(x))\cdot \frac{\partial \ell_e(x)}{\partial [x^i]_p} 
 = \sum_{e\in \mathcal{E}} J_e(\ell_e(x))  \cdot \mathds{1}_{\{e \in p\}} = J^i_p(x)$. 

The learning routine for this congestion game can be briefly summarized as follows: at each iteration, this group of players follow the routes from the origins to destinations suggested by the last iteration, run the experiments, receive the travel times that may originate from some previous iterations, and update the action profiles. 

\subsection{Experimental details and simulation results}
We perform a numerical analysis based on the dataset of the eastern Massachusetts highway, which consists of $74$ nodes and $258$ directed edges \cite{TransportationNetworks}. 
There are in total $200$ players (O/D pairs) and each player has $20$ different routes to choose from. 
The BPR function is leveraged to map the traffic load on each edge to the commensurate travel time. 
For each road $e \in \mathcal{E}$, the free-flow parameter $a_e$ is randomly sampled from the uniform distribution $U[2, 3]$. Similarly, the coefficient $b_e$, capacity $c_e$, and the power $r_e$ are sampled from $U[3, 13]$, $U[60, 80]$, and $U[1, 1.5]$, respectively. 
The traffic demand for each player $i \in \playerN$ is randomly sampled from $U[10, 20]$. 
With the above parameters chosen, the potential function $\Phi$ is convex and $L$-smooth on the direct product of scaled simplices. 
The scaled negative entropy is leveraged as the regularizer for the proposed algorithms, i.e., for each player $i \in \playerN$, $\psi_i(x) \coloneqq \sum_{p=1}^{P_i} \frac{[x^i]_p}{S_i} \log(\frac{[x^i]_p}{S_i})$, and its mirror map can be derived as 
$\nabla \psi^*_i(x) \coloneqq \Big[\frac{S_i \exp([x^i]_p)}{\sum_{q = 1}^{P^i} \exp([x^i]_q)}\Big]_{p = 1, \ldots, P_i}$. 

In the first experiment, we let this group of players be affected by the same deterministic delay function $d^i_k$, with the results illustrated in Fig.~\ref{fig:deter-delay}. 
For Case~1, all the players have access to immediate feedback and can update their strategy instantaneously. 
As reflected in Fig.~\ref{fig:deter-delay}, the metric $\Delta \Phi_k \coloneqq \Phi(y_k) - \Phi(x_*)$ decays at a rate of $O(1/k^2)$, which matches the result of Theorem~\ref{thm:convg-rt-no-delay}. 
For Cases~2 and 3, a constant delay time is enforced at each iteration, and the convergence rate degrades from $O(1/k^2)$ to $O(1/k)$. 
In addition, for Case~3 which suffers from a larger constant delay, the convergence rate at the first $10^2$ iterations is slower than that of Case~2, while after the $10^2$-th iteration, the rate matches that of Case~2. 
As the power of the delay function increases but remains less than 1, the convergence rate of the proposed algorithm decreases commensurately in Cases~4 and 5. 
Lastly, we apply the proposed algorithm to Case~6 with linear delay, the metric $\Delta \Phi_k$ keeps a descending trend, nevertheless at a relatively conservative rate. 
\Tblue{The curves of Case~2 to 5 exhibit oscillations since when the expected feedback is delayed, players use the most recent feedback available as a substitute, which introduces some approximation error. }

To better model the realistic setup where different players suffer from various delays and update asynchronously, we conduct the second experiment, where for each player $i$, the feedback $g^i_k$ arrives at the $\lceil k +D^i_k \rceil$-th iteration, and \Tblue{$D^i_k$ is assumed to be a uniform random variable with the distribution $U[0, 2d^i_k]$ such that $\expt{}{D^i_k} = d^i_k$ with $d^i_k$ defined in the first experiment}. 
The results are included in Fig.~\ref{fig:stoch-delay}. 
We observe empirically that except for Case~2 where only a small constant delay plays its role, the convergence speed improves when the feedback is received and the update is conducted asynchronously, as demonstrated by Cases~3 to 6. 

\begin{figure}
    \centering
    \includegraphics[width=0.45\textwidth]{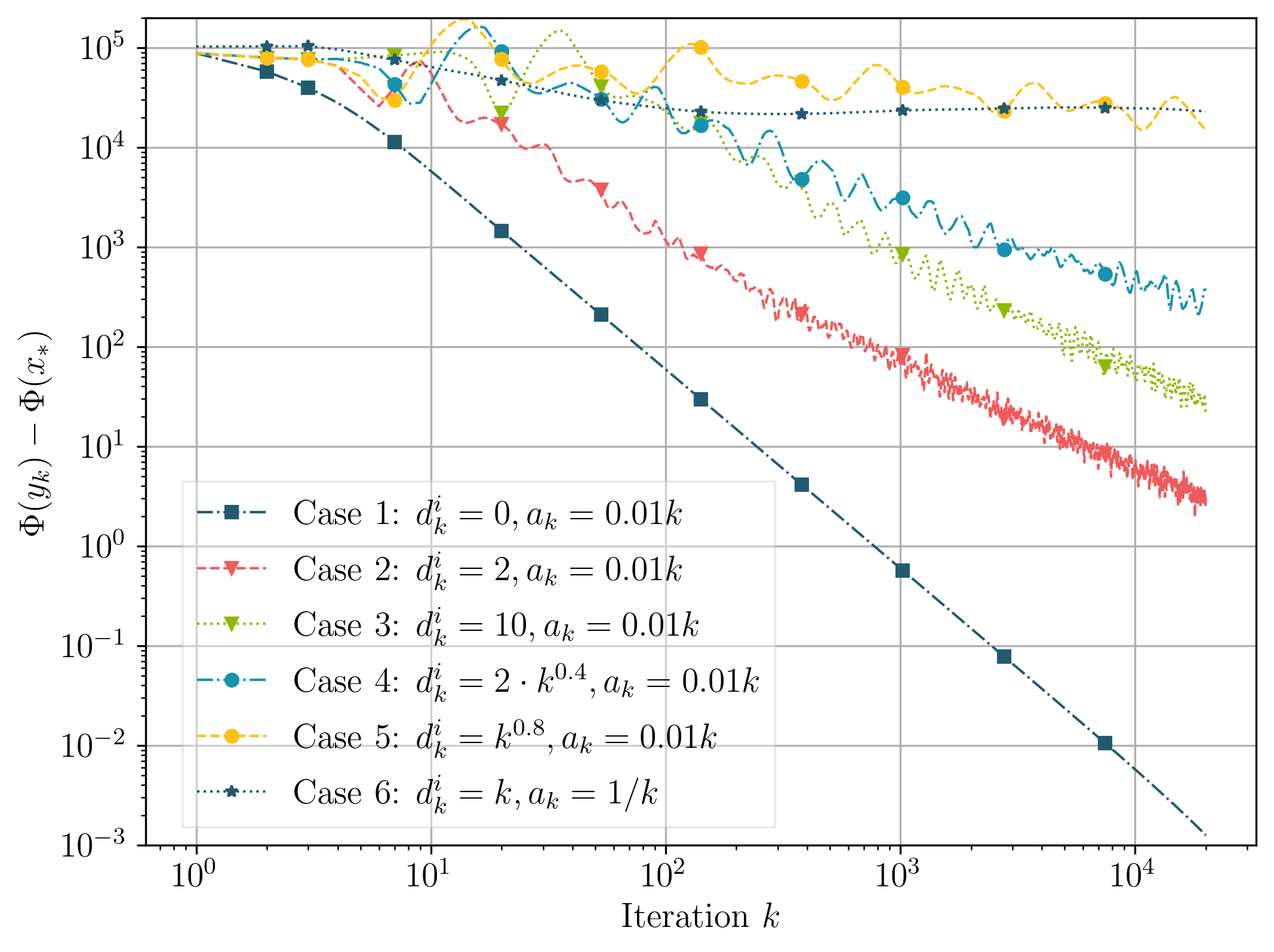}
    \caption{Convergence Rates of Algorithms~\ref{alg:updt-no-delay} \& \ref{alg:updt-delay} Subject to Different Types of Deterministic Delays}
    \label{fig:deter-delay}
\end{figure}

\begin{figure}
    \centering
    \includegraphics[width=0.45\textwidth]{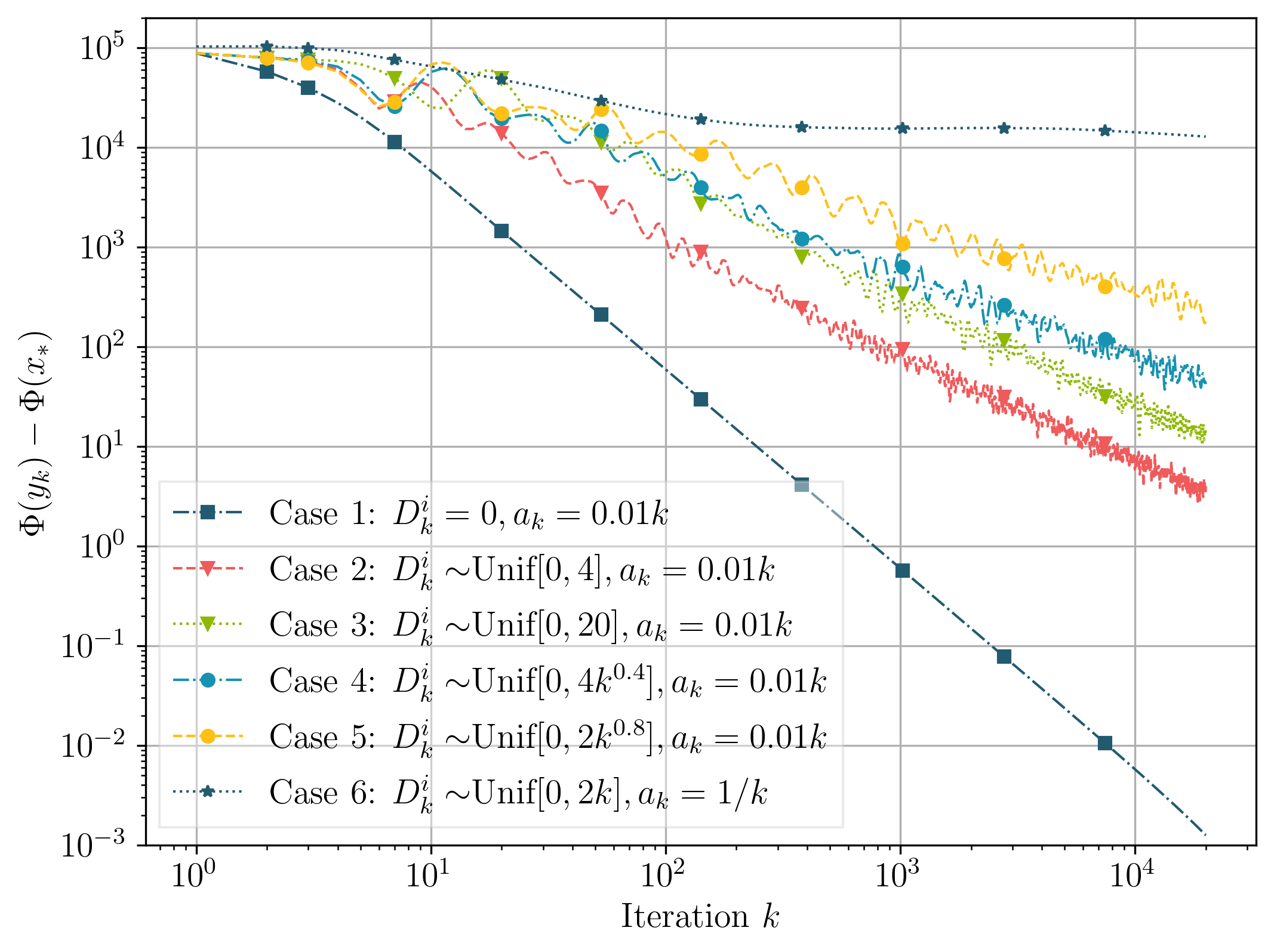}
    \caption{Convergence Rates of Algorithm~\ref{alg:updt-delay} Subject to Stochastic Delays with Different Upper Bounds}
    \label{fig:stoch-delay}
\end{figure}

\section{CONCLUSIONS}

This paper studies the convergence properties of a proposed algorithm for multi-player potential games with feedback delays. 
For games with instantaneous feedback, it directly follows from the characteristics of the centralized version of the proposed algorithm that it will converge at a rate of $O(1/k^2)$. 
When delays exist and are upper-bounded by some \Tblue{fraction power} functions, convergence can be guaranteed by properly tuning the step sizes, while the convergence rates are compromised accordingly. 
There remain several open problems. 
The first one concerns the regret analysis of the algorithm under the accelerated scheme considered since in the information parsimonious environment, a no-regret algorithm would be the players' most sensible choice. 
In addition, we note that even though the algorithms work in a decentralized manner and each player updates independently, all players should agree to use the same sequence of step sizes. 
Exploring the possibility to use heterogeneous or even adaptive step sizes is one of our future directions.





\bibliographystyle{IEEEtran}
\bibliography{IEEEabrv,references}

\appendices
\section*{Appendix}
\renewcommand{\thesubsection}{\Alph{subsection}}

\newtheorem{appdxlemma}{Lemma}
\numberwithin{appdxlemma}{subsection} 

\subsection{Approximate Duality Gap and Technical Lemmas}\label{appd:approx-dual-gap}
For the completeness of the manuscript, in this subsection of Appendix, we will briefly summarize the main results in \cite{cohen2018acceleration}, upon which the convergence rate results in our main text are established. 
To construct an accelerated gradient descent which allows for an inexact oracle, the authors of \cite{cohen2018acceleration} leverage an approximate duality gap for each stage $k$, with its upper bound denoted by $U_k$ and lower bound $L_k$. 
Three points $(x_k, y_k, z_k)$ are maintained throughout the iterations: the gradient is queried at $x_k$, $y_k$ serves as the primal solution at the end of each iteration $k$, and $z_k$ the dual solution. 
The bounds $U_k$ and $L_k$ are designed such that for the function $f$ that we would like to minimize, $f(y_k) - f(x_*) \leq G_k = U_k - L_k$, where $x_*$ represents an arbitrary point in the solution set. 
Let $(a_k)_{k \in \nset{}{}}$ be a positive sequence and $A_k = \sum_{t=1}^{k} a_t$. 
The inexact gradient information is expressed as $\Tilde{\nabla}f(y_k) = \nabla f(y_k) + \eta_k$. 
The authors of \cite{cohen2018acceleration} select $U_k$ and $L_k$ as follows:
\begin{align*}
& U_k = f(y_k) \\
& L_k = \frac{1}{A_k}\Big(\sum_{t=1}^{k}a_tf(x_t) - \sum_{t=1}^{k}a_t\langle \eta_t, x_* - x_t\rangle - D_{\psi}(x_*, x_0) \\
& \qquad + \min_{u \in \mathcal{X}}\big\{\sum_{t=1}^{k}a_t\langle \Tilde{\nabla}f(x_t), u - x_t\rangle + D_{\psi}(u, x_0)\big\}\Big).
\end{align*}
The essential idea is that if the difference $E_k = A_kG_k - A_{k-1}G_{k-1}$ can be upper bounded, then by the telescoping sum $G_k = \frac{A_1}{A_k}G_k + \frac{1}{A_k}\sum_{t=2}^{k}E_t$, one can obtain some insights regarding the decaying rate of $f(y_k) - f(x_*)$. 
Tailored to the duality gap given above, the steps of (AGD+) are defined as: 
\begin{align*}
& x_k \leftarrow \frac{A_{k-1}}{A_k}y_{k-1} + \frac{a_k}{A_k} \nabla \psi^*(z_{k-1}), \\ 
& z_k \leftarrow z_{k-1} - a_k\Tilde{\nabla}f(x_k), \qquad\qquad\qquad\qquad \text{(AGD+)} \\
& y_k \leftarrow \frac{A_{k-1}}{A_k}y_{k-1} + \frac{a_k}{A_k}\nabla \psi^*(z_k), 
\end{align*}
with $x_1 = x_0$, $y_1 = v_1 = \nabla \psi^*(z_1)$, and $v_k \coloneqq \nabla \psi^*(z_k)$. 

An intermediate result that works as a heavy-lifting tool for our later convergence rate analysis is provided in the following lemma about the separation of $E_k$. 
\begin{appdxlemma}\label{le:cohen}
(\cite[Lemma~3.2]{cohen2018acceleration}) 
Let $E^\eta_k = a_k\langle \eta_k, x_* - v_k\rangle$ and $E^e_k = A_k(f(y_k) - f(x_k)) - A_k\langle \nabla f(x_k), y_k - x_k\rangle - D_{\psi}(v_k, v_{k-1})$. 
Then $E_k \leq E^\eta_k + E^e_k$. 
\end{appdxlemma}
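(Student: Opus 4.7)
The plan is to decompose $E_k = (A_k U_k - A_{k-1} U_{k-1}) - (A_k L_k - A_{k-1} L_{k-1})$ and bound the two incremental pieces separately. The pivotal observation that makes the lower-bound side tractable is that the inner optimization in the definition of $L_k$ is solved in closed form at $u = v_k$: the first-order optimality condition reads $\nabla\psi(u) = \nabla\psi(x_0) - \sum_{t=1}^k a_t\tilde{\nabla}f(x_t)$, which by the (AGD+) dual update coincides with $z_k$, so the minimizer is $\nabla\psi^*(z_k) = v_k$. Substituting this (and the analogous $v_{k-1}$ at step $k-1$) turns $L_k$ and $L_{k-1}$ into explicit formulas whose incremental difference can be computed directly.

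Computing $A_k L_k - A_{k-1} L_{k-1}$, the $D_\psi(x_*, x_0)$ terms cancel and the running sums of $a_t f(x_t)$ and $a_t\langle\eta_t, x_* - x_t\rangle$ reduce to their single $k$-th summands. What remains is the difference of the two closed-form minima, which I would simplify using the three-point Bregman identity $D_\psi(v_k, x_0) - D_\psi(v_{k-1}, x_0) = D_\psi(v_k, v_{k-1}) + \langle\nabla\psi(v_{k-1}) - \nabla\psi(x_0), v_k - v_{k-1}\rangle$ together with the optimality identity $\nabla\psi(v_{k-1}) - \nabla\psi(x_0) = -\sum_{t=1}^{k-1} a_t\tilde{\nabla}f(x_t)$. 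The linear contribution from the three-point identity cancels exactly against the term $\sum_{t=1}^{k-1} a_t \langle \tilde{\nabla}f(x_t), v_k - v_{k-1}\rangle$ generated when the two minima are differenced, leaving the clean residue $a_k\langle\tilde{\nabla}f(x_k), v_k - x_k\rangle + D_\psi(v_k, v_{k-1})$. Splitting $\tilde{\nabla}f(x_k) = \nabla f(x_k) + \eta_k$ and gathering every $\eta_k$ contribution, they collapse into $-a_k\langle\eta_k, x_* - v_k\rangle = -E^\eta_k$.

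For the upper-bound side, $A_k U_k - A_{k-1} U_{k-1} = A_k f(y_k) - A_{k-1} f(y_{k-1})$, I would bound $-A_{k-1} f(y_{k-1})$ from above by convexity at $x_k$: $f(y_{k-1}) \geq f(x_k) + \langle\nabla f(x_k), y_{k-1} - x_k\rangle$. The AGD+ averaging rules then supply two algebraic identities, $A_{k-1}(y_{k-1} - x_k) = a_k(x_k - v_{k-1})$ and $A_k(y_k - x_k) = a_k(v_k - v_{k-1})$, which respectively rewrite the linear term arising from convexity and the linear term left over from the lower-bound analysis in terms of the same vector $y_k - x_k$. Adding the two incremental pieces, the $\nabla f(x_k)$ cross terms coalesce into $-A_k\langle\nabla f(x_k), y_k - x_k\rangle$, the $f$-values consolidate into $A_k(f(y_k) - f(x_k))$, and a single $-D_\psi(v_k, v_{k-1})$ survives; this is precisely $E^e_k$, so $E_k \leq E^\eta_k + E^e_k$.

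The main obstacle is the bookkeeping in the second paragraph: one must track the $\tilde{\nabla}f$-sums inside both the closed-form min expansions and the three-point Bregman identity, and verify the exact cancellation of their linear-in-$(v_k - v_{k-1})$ contributions, which is what produces the quadratic $D_\psi(v_k, v_{k-1})$ on its own. Once that cancellation is secured, the rest is a mechanical consolidation through the two AGD+ averaging identities and a single invocation of convexity for the $U$-side bound.
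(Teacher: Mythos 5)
The paper itself offers no proof of this lemma---it is imported verbatim as Lemma~3.2 of the cited reference---so you are supplying an argument where the paper supplies only a citation. Your reconstruction is essentially the proof from that reference, and it is correct: the split of $E_k$ into the $U$- and $L$-increments, the identification of the inner minimizer with $v_k = \nabla\psi^*(z_k)$, the three-point Bregman identity, the regrouping of all $\eta_k$ contributions into $-E^\eta_k$, the convexity bound $f(y_{k-1}) \ge f(x_k) + \langle \nabla f(x_k), y_{k-1} - x_k\rangle$, and the two averaging identities $A_{k-1}(y_{k-1}-x_k) = a_k(x_k - v_{k-1})$ and $A_k(y_k - x_k) = a_k(v_k - v_{k-1})$ do assemble into exactly $E^e_k + E^\eta_k$; the bookkeeping closes. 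One refinement is needed: because the minimum defining $L_k$ is taken over the constrained set $\mathcal{X}$, the stationarity condition $\nabla\psi(v_{k-1}) = \nabla\psi(x_0) - \sum_{t=1}^{k-1} a_t \tilde{\nabla} f(x_t)$ holds only in the form of the variational inequality $\langle \nabla\psi(v_{k-1}) - \nabla\psi(x_0) + \sum_{t=1}^{k-1} a_t \tilde{\nabla} f(x_t),\, u - v_{k-1}\rangle \ge 0$ for all $u \in \mathcal{X}$. Accordingly, the ``exact cancellation'' of the linear-in-$(v_k - v_{k-1})$ terms should be stated as: taking $u = v_k$, their sum is nonnegative and may be dropped from the lower bound on $A_k L_k - A_{k-1} L_{k-1}$, which is precisely the direction required to conclude $E_k \le E^\eta_k + E^e_k$. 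With that one-line amendment your proof is complete and, unlike the paper's treatment, self-contained.
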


\subsection{Proof of Theorem \ref{thm:convg-rt-no-delay}}\label{pf:convg-rt-no-delay}

We begin by considering the compact formulations for the updates in Algorithm~\ref{alg:updt-no-delay}. 
Let $\psi(z) \coloneqq \sum_{i \in \playerN} \psi_i(z^i)$, and it directly follows that \Tblue{$\nabla \psi^*(z) = \big[\argmax_{x^i \in \mathcal{X}_i}\{\langle z^i, x^i\rangle - \psi_i(x^i)\}\big]_{i \in \playerN} = \big[\nabla \psi^*_i(z^i)\big]_{i \in \playerN}$}. 
Moreover, since we assume that $\mathcal{G}$ admits a potential function $\Phi$, the pseudogradient at point $x$ equals the gradient of $\Phi$ at this point, i.e., $F(x) = \nabla \Phi(x)$. 
Then Algorithm~\ref{alg:updt-no-delay} can be compactly written as follows: 
\begin{align*}
& z_k \leftarrow z_{k-1} - a_k\nabla \Phi(x_k) \\
& y_k \leftarrow \frac{A_{k-1}}{A_k} y_{k-1} + \frac{a_k}{A_k} \nabla \psi^*(z_k) \\
& x_{k+1} \leftarrow \frac{A_{k}}{A_{k+1}} y_k + \frac{a_{k+1}}{A_{k+1}} \nabla \psi^*(z_k).
\end{align*}
Then we can proceed similar to the proof of \cite[Thm.~3.4]{cohen2018acceleration}. 
Nevertheless, we include detailed steps for the completeness of this paper. 
Let $v_k \coloneqq \nabla \psi^*(z_k)$. 
Based on the results in Appendix~\ref{appd:approx-dual-gap} and using the same set of notations ($U_k, L_k, G_k$, etc.), we can bound the change of subsequent iterations $E_k$ in the following way: 
\begin{align*}
& E_k = A_kG_k - A_{k-1}G_{k-1} \overset{(a)}{\leq} A_k(\Phi(y_k) - \Phi(x_k))\\
& \quad - A_k\langle \nabla \Phi(x_k), y_k - x_k\rangle - D_{\psi}(v_k, v_{k-1}) \\
& \overset{(b)}{\leq} A_k\cdot\frac{L}{2}\norm{x_k - y_k}^2 - \frac{\mu_*}{2}\norm{v_k - v_{k-1}}^2 \\
& = (\frac{L(a_k)^2}{2A_k} - \frac{\mu_*}{2})\norm{v_k - v_{k-1}}^2 \overset{(c)}{\leq} 0, 
\end{align*}
where 
$(a)$ follows from Lemma~\ref{le:cohen} with the first-order error $\eta_k = 0$ for all $k \in \nset{}{+}$;
$(b)$ is the result of the $L$-smoothness of $\Phi$ and the fact that $D_\psi(x, x^\prime) \geq \mu_*/2\norm{x - x^\prime}^2$; 
$(c)$ is due to the condition that $(a_k)^2/A_k \leq \mu_*/L$. 
Hence, we can arrive at the following conclusion about the deterministic convergence rate:
\begin{align*}
& \Phi(y_k) - \Phi(x_*) \leq G_k \leq \frac{A_1}{A_k}G_1 \\
& \leq \frac{A_1}{A_k}\Big(\Phi(y_1) - \Phi(x_1) - \langle \nabla \Phi(x_1), v_1 - x_1\rangle \\
& \qquad + \frac{1}{A_1}(D_{\psi}(x_*, x_0) - D_{\psi}(v_1, x_0))\Big) \\
& \overset{(a)}{\leq} \frac{D_{\psi}(x_*, x_0)}{A_k} + \frac{A_1}{2A_k}(\frac{L(a_1)^2}{A_1} - \mu_*)\norm{v_1 - v_0}^2 \\
& \leq \frac{D_{\psi}(x_*, x_0)}{A_k},
\end{align*}
where $(a)$ follows from the initialization that $x_0 = x_1$ and $v_1 = y_1$.

\subsection{Convergence Rate Analysis with Feedback Delays}\label{pf:thm:convg-rt-delay}

In the next lemma, we analyze the difference between pseudogradients from two subsequent iterations, which will be later leveraged to control the error introduced by delays.  
\begin{appdxlemma}\label{le:grad-bd}
Under the same assumptions of Theorem~\ref{thm:convg-rt-delay}, for each player $i \in \playerN$, at the $\ell$-th iteration, the dual norm of the difference between subsequent partial gradients queried at $x_{\ell+1}$ and $x_{\ell}$ are bounded as follows:
\begin{align}
\norm{g^i_{\ell+1} - g^i_{\ell}}_* \leq LD_{\mathcal{X}}\frac{a_{\ell} + a_{\ell+1}}{A_{\ell+1}},
\end{align}
where $D_{\mathcal{X}}$ is the diameter of the feasible set $\mathcal{X}$. 
\end{appdxlemma}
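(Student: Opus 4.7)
The plan is to control $\|x_{\ell+1} - x_\ell\|$ purely in terms of the diameter $D_{\mathcal{X}}$ guaranteed by Assumption~\ref{asp:fesb-set-bdd} and the step-size ratio $(a_\ell + a_{\ell+1})/A_{\ell+1}$, and then to lift this iterate bound into a dual-norm estimate on $g^i_{\ell+1} - g^i_\ell$ by invoking the $L$-smoothness of the potential $\Phi$.

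I would start by unrolling the Algorithm~\ref{alg:updt-no-delay} updates. Writing $v_k := \nabla \psi^*(z_k)$ and substituting the $y_\ell$ recursion into the $x_{\ell+1}$ recursion yields
\[
x_{\ell+1} = \frac{A_{\ell-1}}{A_{\ell+1}}\, y_{\ell-1} + \frac{a_\ell + a_{\ell+1}}{A_{\ell+1}}\, v_\ell, \qquad x_\ell = \frac{A_{\ell-1}}{A_\ell}\, y_{\ell-1} + \frac{a_\ell}{A_\ell}\, v_{\ell-1}.
\]
Subtracting and using $A_{\ell+1} - A_\ell = a_{\ell+1}$ together with $A_\ell = A_{\ell-1} + a_\ell$ produces the compact decomposition
\[
x_{\ell+1} - x_\ell = \frac{A_{\ell-1}\, a_{\ell+1}}{A_\ell\, A_{\ell+1}}\, (v_\ell - y_{\ell-1}) + \frac{a_\ell}{A_\ell}\, (v_\ell - v_{\ell-1}),
\]
whose two nonnegative coefficients sum exactly to $(a_\ell + a_{\ell+1})/A_{\ell+1}$.

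Next I would verify that the three vectors $v_\ell, v_{\ell-1}, y_{\ell-1}$ all lie in $\mathcal{X}$. The mirror map \eqref{eq:mr-map} returns a point of $\mathcal{X}$, so $v_\ell, v_{\ell-1} \in \mathcal{X}$ automatically. For $y_{\ell-1}$, the case $\ell = 1$ is vacuous since its coefficient $A_0 a_2 / (A_1 A_2)$ vanishes; for $\ell \geq 2$, a one-line induction (base case $y_1 = v_1$, which holds because $A_0 = 0$) shows $y_k \in \mathcal{X}$ for every $k \geq 1$, since each subsequent $y_k$ is a convex combination of $y_{k-1}$ and $v_k$. Hence $\|v_\ell - y_{\ell-1}\|$ and $\|v_\ell - v_{\ell-1}\|$ are both at most $D_{\mathcal{X}}$, and the triangle inequality combined with the coefficient sum gives
\[
\|x_{\ell+1} - x_\ell\| \leq \frac{a_\ell + a_{\ell+1}}{A_{\ell+1}}\, D_{\mathcal{X}}.
\]

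Finally, Assumption~\ref{asp:cvx-potfuc} guarantees that $\nabla \Phi$ is $L$-Lipschitz on $\mathcal{X}$ with respect to the aggregated norm, while the potential identity $F(x) = \nabla \Phi(x)$ gives $g^i_k = [\nabla \Phi(x_k)]_i$. Since the dual norm of any single player-block is dominated by the dual norm of the full stacked gradient, one arrives at
\[
\|g^i_{\ell+1} - g^i_\ell\|_* \leq \|\nabla \Phi(x_{\ell+1}) - \nabla \Phi(x_\ell)\|_* \leq L\,\|x_{\ell+1} - x_\ell\| \leq L D_{\mathcal{X}}\, \frac{a_\ell + a_{\ell+1}}{A_{\ell+1}},
\]
which is the desired estimate. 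The main obstacle is the algebraic bookkeeping behind the compact decomposition of $x_{\ell+1} - x_\ell$ into two differences of points in $\mathcal{X}$ whose coefficients telescope to $(a_\ell + a_{\ell+1})/A_{\ell+1}$; once this identity is in hand, the bounded-domain bound and $L$-smoothness estimate close the argument immediately.
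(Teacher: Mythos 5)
Your proposal is correct and follows essentially the same route as the paper: both arguments rewrite $x_{\ell+1}-x_{\ell}$ as a sum of two differences of points in $\mathcal{X}$ whose nonnegative coefficients sum exactly to $(a_{\ell}+a_{\ell+1})/A_{\ell+1}$, bound each difference by the diameter $D_{\mathcal{X}}$, and then invoke the $L$-smoothness of $\Phi$ together with $F=\nabla\Phi$. The only (immaterial) difference is the choice of decomposition --- the paper eliminates $y_{\ell}$ to obtain $\frac{a_{\ell}}{A_{\ell+1}}(v_{\ell}-v_{\ell-1})+\frac{a_{\ell+1}}{A_{\ell+1}}(v_{\ell}-x_{\ell})$, whereas you retain $y_{\ell-1}$ with coefficient $\frac{A_{\ell-1}a_{\ell+1}}{A_{\ell}A_{\ell+1}}$; your algebra checks out, and your explicit induction showing $y_k\in\mathcal{X}$ is a detail the paper leaves implicit.
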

\begin{proof}
Based on the updating steps given in Algorithm~\ref{alg:updt-delay}, we can expand the difference $x_{\ell+1}$ and $x_{\ell}$ in the following way:
\begin{align*}
& x_{\ell+1} = \frac{A_\ell}{A_{\ell+1}}y_{\ell} + \frac{a_{\ell+1}}{A_{\ell+1}}v_{\ell}, y_{\ell} = x_{\ell} - \frac{a_{\ell}}{A_{\ell}}v_{\ell-1} + \frac{a_{\ell}}{A_{\ell}}v_{\ell}, \\
& x_{\ell+1} = \frac{A_{\ell}}{A_{\ell+1}}x_{\ell} - \frac{a_{\ell}}{A_{\ell+1}}v_{\ell-1} + \frac{a_{\ell} + a_{\ell+1}}{A_{\ell+1}}v_{\ell}, \\
& x_{\ell+1} - x_{\ell} = \frac{a_{\ell}}{A_{\ell+1}}(v_{\ell} - v_{\ell-1}) + \frac{a_{\ell+1}}{A_{\ell+1}}(v_{\ell} - x_{\ell}). 
\end{align*}
Since $v_{\ell}$ and $v_{\ell-1}$ are the results of mirror maps, and $x_{\ell}$ and $x_{\ell+1}$ are the convex combinations of the points inside the feasible set $\mathcal{X}$, we have
\begin{align*}
\norm{x_{\ell+1} - x_{\ell}} &\leq \frac{a_{\ell}}{A_{\ell+1}}\norm{v_{\ell} - v_{\ell-1}} + \frac{a_{\ell+1}}{A_{\ell+1}}\norm{v_{\ell} - x_{\ell}} \\
& \leq \frac{a_\ell + a_{\ell+1}}{A_{\ell+1}}D_{\mathcal{X}}. 
\end{align*}
Finally, by the $L$-smoothness of the potential function $\Phi$, we can conclude $\norm{g^i_{\ell+1} - g^i_{\ell}}_* \leq LD_{\mathcal{X}}(a_{\ell} + a_{\ell+1})/A_{\ell+1}$.
\end{proof}

\begin{proof}
(Proof of Theorem~\ref{thm:convg-rt-delay}) 
By leveraging Lemma~\ref{le:cohen}, the difference between the potential value at the $k$-th iteration and the optimal potential value can be bounded as follows:
\begin{align}\label{eq:delay-upp-bd}
\begin{split}
& \Phi(y_k) - \Phi(x_*) \leq G_k \leq \frac{A_1}{A_k}G_1 + \frac{1}{A_k} \sum_{t=1}^{k} E_t\\
& \overset{(a)}{\leq} \Big(\frac{A_1}{A_k}G_1 + \frac{1}{A_k} \sum_{t=1}^{k} E^e_t \Big) + \frac{1}{A_k} \sum_{t=1}^{k} E^\eta_t \\
& \overset{(b)}{\leq} \frac{1}{A_k}D_{\psi}(x_*, x_0) + \frac{1}{A_k}\sum_{t=1}^{k}a_t \langle \eta_t, x_* - v_t\rangle,
\end{split}
\end{align}
where $(a)$ is the direct result of Lemma~\ref{le:cohen}; by using the same analysis in Theorem~\ref{thm:convg-rt-no-delay} and the definition of $E^\eta_t$, we then have the inequality in $(b)$. 
Here, the second part in the last line emerges as the result of the delays in first-order oracles and particularly $\eta_1 = 0$. 
We can further separate this part regarding the contribution of each player:
\begin{align*}
& \sum_{t=1}^{k}a_t \langle \eta_t, x_* - v_t\rangle 
= \sum_{i \in \playerN}\sum_{t=1}^{k} a_t\langle \eta^i_t, x^i_* - v^i_t\rangle, 
\end{align*}
where $\eta^i_t = (g^i_{t-1} - g^i_t) + (g^i_{t-2} - g^i_{t-1}) + \cdots + (g^i_{s^i(t)} - g^i_{s^i(t)+1})$. 
For each player $i$, we next proceed to establish an upper bound for the sum of inner products associated with $\eta^i_t$:
\begin{align*}
& \sum_{t=1}^{k} a_t\langle \eta^i_t, x^i_* - v^i_t\rangle \leq \sum_{t=1}^{k} a_t \norm{\eta^i_t}_* \cdot \norm{x^i_* - v^i_t} \\
& \leq D_{\mathcal{X}}\sum_{t=1}^{k} a_t \norm{\eta^i_t}_* \leq D_{\mathcal{X}}\sum_{t=1}^{k} a_t \sum_{\ell=s^i(t)+1}^{t} \norm{g^i_{\ell-1} - g^i_{\ell}}_* \\
& \leq L(D_{\mathcal{X}})^2\sum_{t=1}^{k}a_t \sum_{\ell=s^i(t)+1}^{t}\frac{a_\ell + a_{\ell-1}}{A_{\ell}} \\
& \leq 2L(D_{\mathcal{X}})^2\sum_{t=1}^{k}a_t \sum_{\ell=s^i(t)+1}^{t}\frac{a_\ell}{A_{\ell}} \\ 
& \overset{(a)}{\leq} 2L(D_{\mathcal{X}})^2(\beta + 1) \sum_{t=1}^{k} a_t \sum_{\ell = s^i(t)+1}^{t} \frac{1}{\ell} \\
& \leq 2L(D_{\mathcal{X}})^2(\beta + 1) \sum_{t=1}^{k} a_0 t^{\beta} \cdot \frac{t - s^i(t)}{s^i(t) + 1} \\
& \overset{(b)}{\leq} 2L(D_{\mathcal{X}})^2(\beta + 1)a_0 \sum_{t=1}^{k} t^{\beta} \cdot \Big(\frac{1}{s^i(t)+1} + \frac{D}{(s^i(t)+1)^{1 - \alpha}}\Big),
\end{align*}
where $(a)$ is due to the inequality that $A_{\ell} \geq \int^{\ell}_0 a_0\tau^\beta d\tau = a_0/(\beta+1) \ell^{\beta+1}$; 
$(b)$ follows from $t - s^i(t) \leq 1 + D(s^i(t)+1)^{\alpha}$ as suggested in Lemma~\ref{le:delay-bd}. 
For the above two summations in the last line with $0 \leq \alpha < 1$ and $0 < \beta \leq 1$, since $(D+1)(s^i(t) + 1) \geq s^i(t) + 1 + D(s^i(t) + 1)^{\alpha} \geq t$, we have
\begin{align*}
& \sum_{t=1}^{k} \frac{t^{\beta}}{s^i(t)+1} \leq \sum_{t=1}^{k}(D+1)\frac{t^{\beta}}{t} \\
& \qquad \leq (D+1)\int_{0}^k \tau^{\beta-1}d\tau \leq \frac{D+1}{\beta} k^{\beta} \\
& \sum_{t=1}^{k} D \frac{t^\beta}{(s^i(t)+1)^{1-\alpha}} 
\leq \sum_{t=1}^{k} D(D+1)^{1-\alpha} \frac{t^{\beta}}{t^{1-\alpha}} \\
& \qquad \overset{(a)}{\leq} D(D+1)^{1-\alpha} \Big(\int^{k}_{0} \tau^{\alpha + \beta - 1} d\tau + k^{\alpha + \beta - 1}\Big) \\
& \qquad = D(D+1)^{1-\alpha} \Big(\frac{1}{\alpha + \beta}k^{\alpha + \beta} + k^{\alpha + \beta - 1}\Big). 
\end{align*}
Note that the inequality $(a)$ holds regardless of whether $\alpha + \beta - 1 \geq 0$ or not. 
Combining the above results, the inequality in \eqref{eq:delay-upp-bd}, and $A_k \geq \frac{a_0}{\beta+1} k^{\beta+1}$, we can conclude that there exists a constant $C_{\text{sub}}$ independent of the iteration index $k$ such that
\begin{align*}
\Phi(y_k) - \Phi(x_*) \leq C_{\text{sub}} \cdot \frac{1}{k^{1 - \alpha}}, \forall k \in \nset{}{+}. 
\end{align*}
The specific formulation of $C_{\text{sub}}$ is omitted here, whose value depends on the diameter of the feasible set $\mathcal{X}$, the number of players, the parameters of the delay function, and $\beta$ chosen. 
\end{proof}

\subsection{Convergence Rate Analysis with Linear and Superlinear Feedback Delays}\label{pf:thm:convg-rt-spl-delay}
\begin{proof}
(Proof of Corollary~\ref{coro:convg-rt-lnr-delay})
By leveraging the similar arguments in the proof of Theorem~\ref{thm:convg-rt-delay}, 
\begin{align*}
& \Phi(y_k) - \Phi(x_*) \leq G_k = \frac{A_1}{A_k} G_1 + \frac{1}{A_k}\sum_{t=2}^{k} E_t \\
& \leq \frac{1}{A_k}D_{\psi}(x_*, x_0) + \frac{1}{A_k}\sum_{t=2}^{k} E^{e}_{t} + \frac{1}{A_k} \sum_{t=1}^{k} E^{\eta}_t, 
\end{align*}
where $E^{e}_{t}$ and $E^{\eta}_t$ are defined in Lemma~\ref{le:cohen}. 
As has been discussed in the proof of Theorem~\ref{thm:convg-rt-no-delay}, the error $E^e_t$ can be upper bounded as follows:
\begin{align*}
    E^e_k \leq \frac{1}{2}\Big(\frac{(a_k)^2L}{A_k} - \mu_*\Big) \norm{v_k - v_{k-1}}^2. 
\end{align*}
Since $a_k = a_0/k$ and $A_k \geq a_0 \log(k)$, for arbitrary $a_0$, $L$, and $\mu_*$, there always exists an index $K_*$ such that $(K_*)^2\log(K^*) \geq a_0L/\mu_*$. 
The summation over $(E^e_t)_{t=2}^{k}$ can be bounded in the following way:
\begin{align*}
\sum_{t=2}^{k} E^e_t \leq \frac{1}{2}(D_\mathcal{X})^2\sum_{t=2}^{K^*-1} \Big(\frac{a_0L}{t^2\log(t)} - \mu_*\Big) = E^e_*, \forall k \in \nset{}{+}. 
\end{align*}
For the remaining terms, we have 
\begin{align*}
& \sum_{t=1}^{k}a_t \langle \eta_t, x_* - v_t\rangle 
= \sum_{i \in \playerN}\sum_{t=1}^{k} a_t\langle \eta^i_t, x^i_* - v^i_t\rangle. 
\end{align*}
Note that following the initialization in Algorithm~\ref{alg:updt-delay} ensures that $s^i(k) \geq 1$ for all $k \in \nset{}{+}$ and $\eta_1 = 0$. 
Applying similar arguments in the proof of Theorem~\ref{thm:convg-rt-delay} yields:
\begin{align*}
& \sum_{t=1}^{k} a_t\langle \eta^i_t, x^i_* - v^i_t\rangle \leq L(D_{\mathcal{X}})^2\sum_{t=2}^{k} a_t \sum_{\ell=s^i(t)+1}^{t} \frac{a_\ell + a_{\ell-1}}{A_{\ell}} \\
& \leq L(D_{\mathcal{X}})^2\sum_{t=2}^{k} a_t \Big(\sum_{\ell=s^i(t)+1}^{(D+1)(s^i(t)+1)} \frac{a_\ell + a_{\ell-1}}{A_{\ell}}\Big) \\
& \leq 2L(D_{\mathcal{X}})^2\sum_{t=2}^{k} a_t \Big(\sum_{\ell=s^i(t)+1}^{(D+1)(s^i(t)+1)} \frac{2}{\ell \log(\ell)}\Big) \\
& \leq 4L(D_{\mathcal{X}})^2\sum_{t=2}^{k} a_t \Big(\int_{s^i(t) + 1}^{(D+1)(s^i(t)+1)} \frac{1}{\ell \log(\ell)}d\ell \\
& \qquad + \frac{1}{(s^i(t) + 1)\log(s^i(t) + 1)}\Big) \\
& = 4L(D_{\mathcal{X}})^2\sum_{t=2}^{k} a_t \Big(\log\big(\frac{\log(D+1) + \log(s^i(t)+1)}{\log(s^i(t) + 1)}\big)\\
& \qquad + \frac{1}{(s^i(t) + 1)\log(s^i(t) + 1)}\Big) \\
& \overset{(a)}{\leq} 4L(D_{\mathcal{X}})^2a_0 \sum_{t=2}^{k} \frac{1}{t}\Big( \frac{\log(D+1)}{\log(s^i(t)+1)} + \frac{1/\log(s^i(t) + 1)}{(s^i(t) + 1)}\Big) \\
& \overset{(b)}{\leq} 4L(D_{\mathcal{X}})^2a_0 \sum_{t=2}^{k} \frac{1}{t}\underbrace{\Big( \frac{\log(D+1)}{\log(\max(2, \frac{t}{D+1}))} + \frac{(D+1)/\log 2}{t} \Big)}_{\leq M^{\prime}/\log(t)} \\
& \leq 4L(D_{\mathcal{X}})^2a_0M^\prime \Big(\frac{1}{2\log2} + \int_{2}^{k} \frac{1}{\tau \log(\tau)}d\tau\Big) \\
& = 4L(D_{\mathcal{X}})^2a_0M^\prime\Big(\log(\log(k)) + \frac{1}{2\log2} - \log\log2\Big),  
\end{align*}
where $(a)$ is a consequence of the fact that $\log(1 + x) \leq x$ for all $x > -1$; 
for $(b)$, we use that $s^i(t) + 1 \geq \max\{2, t/(D+1)\}$, and note that the components inside the parentheses are dominated by the first part and can be upper bounded by $M^{\prime}\log(t)$ for some constant $M^{\prime}$ depending on $D$. 
Consequently, we have 
\begin{align*}
\Phi(y_k) - \Phi(x_*) = O(\frac{\log \log(k)}{\log(k)}).
\end{align*}
\end{proof}

\begin{proof}
(Proof of Corollary~\ref{coro:convg-rt-sup-delay})
Applying the same arguments in the proof of Corollary~\ref{coro:convg-rt-lnr-delay} yields:
\begin{align*}
\Phi(y_k) - \Phi(x_*) \leq \frac{1}{A_k}\big(D_{\psi}(x_*, x_0) + E^e_*\big) + \frac{1}{A_k}\sum_{t=1}^{k}E^\eta_t. 
\end{align*}
 
Noting that for $k \in \nset{}{+}$, we have 
\begin{align*}
& A_k = \sum_{t=2}^{k+1} \frac{a_0}{t\log(t)} \geq \int_{2}^{k+1} \frac{a_0}{t\log(t)} dt \\
& \geq a_0(\log\log(k+1) - \log\log(2)) \geq a_0\log\log(k+1). 
\end{align*}
Then the summation of $E^\eta_t$ can be upper bounded as follows:
\begin{align*}
& \sum_{t=2}^{k} a_t\langle \eta^i_t, x^i_* - v^i_t\rangle \leq L(D_{\mathcal{X}})^2\sum_{t=2}^{k} a_t \sum_{\ell=s^i(t)+1}^{t} \frac{a_\ell + a_{\ell-1}}{A_{\ell}} \\
& \leq 2L(D_{\mathcal{X}})^2   \cdot \\
& \qquad \sum_{t=2}^{k} \frac{a_0}{(t+1)\log(t+1)}\Big(\sum_{\ell = s^i(t) + 1}^{t}\frac{1}{\ell \log(\ell)\log\log(\ell+1)}\Big)
\end{align*}
Lemma~\ref{le:delay-bd} suggests that $(D+1)(s^i(t)+1)^{\alpha} \geq s^i(t)+1 + D(s^i(t)+1)^\alpha \geq t$ and $s^i(t) + 1 \geq (t/(D+1))^{1/\alpha}$. Then,
\begin{align*}
& \sum_{\ell = s^i(t) + 1}^{t} \frac{1}{\ell \log(\ell)\log\log(\ell+1)} \\
& \leq \sum_{\ell = s^i(t) + 1}^{(D+1)(s^i(t)+1)^{\alpha}} \frac{1}{\ell \log(\ell)\log\log(\ell+1)} \\
& \leq \underbrace{\int_{s^i(t) + 2}^{(D+1)(s^i(t)+2)^{\alpha}}\frac{1}{\ell \log(\ell)\log\log(\ell)} d\ell}_{(\romannum{1})} + \\
& \qquad \underbrace{\frac{2}{(s^i(t) + 1)\log 2 \log\log3}}_{(\romannum{2})}. \\
\end{align*}
For $t \geq 2$, we investigate $(\romannum{1})$ and $(\romannum{2})$ separately below:
\begin{align*}
& (\romannum{1}) = \log\Big( \frac{\log\log\big((D+1)(s^i(t)+2)^{\alpha}\big)}{\log\log(s^i(t)+2)} \Big) \\
& = \log\Big( \frac{\log\big(\log(D+1) + \alpha\log(s^i(t)+2)\big)}{\log\log(s^i(t)+2)} \Big) \\
& = \log \Big( 
\frac{\log\big(
1 + \frac{\log(D+1)}{\alpha\log(s^i(t)+2)}\big) + \log\big(\alpha\log(s^i(t)+2)\big)
}
{\log\log(s^i(t)+2)}
\Big) \\
& \overset{(a)}{\leq} \log\Big(1 + \frac{\log \alpha + \frac{\log(D+1)}{\alpha\log(s^i(t)+2)}}{\log\log(s^i(t)+2)}\Big) \\
& \overset{(b)}{\leq} \frac{\log(\alpha) + \frac{\log(D+1)}{\alpha\log 3}}{\log\log(s^i(t)+2)} \\
& \overset{(c)}{\leq} \frac{\log(\alpha) + \frac{\log(D+1)}{\alpha\log 3}}{
\max\{\log\log 3, \log\big(\log(t + 1) - \log(D+1)\big) - \log\alpha\}
}, \\
& (\romannum{2}) \overset{(d)}{\leq} \frac{2}{\log2\log\log3}\Big(\frac{D+1}{t}\Big)^{1/\alpha}, 
\end{align*}
where $(a)$ and $(b)$ are the results of the inequality $\log(1 + x) \leq x$ for all $x > -1$; 
$(d)$ directly follows from Lemma~\ref{le:delay-bd}, which suggests $(D+1)(s^i(t) + 1)^{\alpha} > t$; 
based on the fact that $x^{\alpha}$ is convex on $[0, +\infty)$, we can arrive at $(c)$ by noting that $(s^i(t) + 2)^{\alpha} \geq (s^i(t) + 1)^{\alpha} + 1^{\alpha} \geq \frac{t}{D+1} + 1 \geq \frac{t+1}{D+1}$. 
Note that $(\romannum{2})$ decays much faster than $(\romannum{1})$. 
Thus there exists a constant $M^{\star}$ that depends on $D$ and $\alpha$, such that $(\romannum{1}) + (\romannum{2}) \leq M^{\star}/\log\log(t+1)$ for all $t \geq 2$. 
Using this fact, the evolution of $\frac{1}{A_k}\sum_{t=1}^{k}E^\eta_t$ can be characterized as follows:
\begin{align*}
& \frac{1}{A_k}\sum_{t=1}^{k}E^\eta_t \leq \frac{2L(D_{\mathcal{X}})^2}{ \log\log(k+1)} \cdot \\
& \qquad\qquad \Big(\sum_{t=2}^{k}\frac{1}{(t+1)\log(t+1)} \cdot \frac{M^\star}{\log\log(t+1)} \Big) \\
& \leq \frac{2L(D_{\mathcal{X}})^2M^{\star}}{\log\log(k+1)} (\int^{k+1}_3 \frac{1}{\tau\log\tau\log\log\tau}d\tau + \frac{1/\log\log3}{3\log3}) \\
& \leq \frac{2L(D_{\mathcal{X}})^2M^{\star}}{\log\log(k+1)}(\log\log\log(k+1) + 6), 
\end{align*}
where the last inequality simply follows from the result of the integral and the constant $-\log\log\log3 + 1/(3\log3\log\log3) < 6$. 
Consequently, we conclude that
\begin{align*}
\Phi(y_k) - \Phi(x_*) = O(\frac{\log\log\log(k)}{\log\log(k)}). 
\end{align*}
\end{proof}


\end{document}